\documentclass{amsart}

\usepackage[T1]{fontenc}
\usepackage[utf8]{inputenc}
\usepackage{amsmath,amssymb,mathtools}
\usepackage{xcolor}
\usepackage{todonotes}

\newtheorem{theorem}{Theorem}[section]
\newtheorem{proposition}[theorem]{Proposition}
\newtheorem{lemma}[theorem]{Lemma}
\newtheorem{corollary}{Corollary}[section]

\newtheorem{smcorollary}[theorem]{Corollary}
\newcommand{\Var}{\operatorname{Var}}
\theoremstyle{remark}
\newtheorem*{remark}{Remark}
\newtheorem{question}{Question}
\newtheorem{definition}{Definition}
\newtheorem{problem}{Problem}

\newcommand{\F}{\mathbb{F}}
\newcommand{\Fp}{\mathbb F_p}

\newcommand{\PP}{\mathbb P}

\newcommand{\one}{\mathbf 1}
\newcommand{\Disc}{\operatorname{Disc}}

\title{Non-existence of sets with few special directions}
\author{Luca Ghidelli}
\thanks{The second and third authors have been unable to establish contact with Luca Ghidelli despite repeated attempts using his previously known email address and through professional contacts. His current contact information is therefore unknown to us. A substantial part of the work presented in this paper was carried out several years ago, and Luca Ghidelli made essential intellectual contributions to its development. In view of these contributions, we consider it appropriate to retain him as a co-author of the present manuscript.
 }
\author{Gergely Kiss}
\address[Gergely Kiss]{Corvinus University of Budapest, Department of Mathematics,
 Budapest, Hungary, and HUN-REN Alfr\'ed R\'enyi Institute of Mathematics,
 Budapest, Hungary}
\email{kigergo57@gmail.com}
\thanks{Gergely Kiss is supported by National Research, Development and
 Innovation Fund, OTKA grants no. FK 142993, Starting 150576 and Excellence 154121.}
\author{G\'abor Somlai}
\address[G\'abor Somlai]{The University of Melbourne, Melbourne, Australia, and E\"otv\"os Lor\'and University, Budapest, Hungary
 (on unpaid leave)}
\email{gabor.somlai@unimelb.edu.au, gabor.somlai@ttk.elte.hu}

\thanks{G\'abor Somlai was supported by  ARC Discovery Project DP250104965, and OTKA grant no. SNN 152582.}
\date{}
\subjclass[2010]{05B25, 51A15, 51E15, 11L07}
\keywords{special direction, equidistributed direction, projection polynomial, exponential sums, discrepancy}

\begin{document}

\begin{abstract}
Let $p$ be an odd prime and let $S\subseteq\Fp^2$ have cardinality divisible by $p$. We prove that, for all sufficiently large primes $p$, no such set has exactly four special directions, and obtain a conditional extension to larger numbers of special directions under an affine-independence assumption on the corresponding projection functions. A separate second-moment argument shows more generally that, for every fixed $k\ge4$, no subset of $\Fp^2$ has exactly $k$ special directions once $p$ is sufficiently large. In fact, the result holds uniformly for $k$ up to a positive constant times $\sqrt p/\log p$. In contrast, for multisets every prescribed collection of at most $p$ directions can occur as the set of special directions of a $\{0,1,2\}$-valued multiset on $\Fp^2$.
\end{abstract}

\maketitle

\section{Introduction}

Let $p$ be an odd prime. For $0\ne u\in\Fp^2$ and $t\in\Fp$, write
\[
 L_{u,t}=\{x\in\Fp^2:\langle  u,x \rangle =t\},\qquad
 \pi_{u,S}(t)=|S\cap L_{u,t}|,
\]
where $\langle  u,x \rangle $ is the standard bilinear function over $\mathbb{F}_p$.  Here $L_{u,t}$ runs through the lines parallel to $u^\perp$, and
$\pi_{u,S}(t)$ counts the points of $S$ on $L_{u,t}$.
Throughout the paper we consider sets $S\subseteq\Fp^2$ whose cardinality
is divisible by $p$, say $|S|=np$.
The direction $u^\perp$ is called \emph{non-special} if
\[
 \pi_{u,S}(t)=n \qquad\text{for every }t\in\Fp,
\]
and \emph{special} otherwise. In other words, a direction is non-special
if $S$ is equidistributed among the lines in that direction, with exactly
$n$ points on each line.
For $|S|=p$, the special directions are exactly
those directions in the classical sense determined by pairs of distinct points of $S$, so this notion which was introduced in \cite{ghidelli} and \cite{kisssomlai} is a natural generalization of Rédei's direction problem.

The classical results for sets of at most $p$ points are the following. The
R\'edei--Megyesi theorem states that a $p$-point set in $\Fp^2$ is a line,
and hence determines one direction only, or determines at least
$(p+3)/2$ directions \cite{redei}. Dress, Klin and Muzychuk obtained the
same dichotomy independently and used it to give a proof of Burnside's
theorem for permutation groups of prime degree \cite{DKM}. Sz\H{o}nyi
proved that every non-collinear $m$-point set, $2\le m\le p$, determines at
least $\lceil(m+3)/2\rceil$ directions \cite{szonyi}. Lov\'asz and
Schrijver classified the equality case for $m=p$. A set of $p$ points determines
exactly $(p+3)/2$ directions if and only if it is affine-equivalent to the
graph of the function $x\mapsto x^{\frac{p+1}{2}}$ \cite{LS}. G\'acs \cite{gacs} proved that there is another gap in the number of directions. He proved that if a
$p$-point set determines more than $(p+3)/2$ directions, then it determines
at least $   \left\lfloor\frac{2(p-1)}{3}\right\rfloor+1$ directions.

We would like to highlight the connection of these results with the uncertainty principle. This comes from the following exact Fourier interpretation of special directions. For a function
$f\colon\Fp^2\to \mathbb{C}$
\begin{equation}\label{eq:defoffourier}
 \widehat f(\xi):=\sum_{x\in\Fp^2}f(x)e_p(-\langle \xi,x \rangle ), \mbox{ where}
 \quad e_p(t):=\exp(2\pi i t/p),
\end{equation}
denote the Fourier transform of the function $f$.
For the characteristic function $\one_S$ of a set $S$, one has
\[
 \widehat{\pi_{u,S}}(x)=\widehat{\one_S}(xu)\quad \mbox{for every  }x\in\Fp,
\]
and we will rely on this fact that 2 dimensional information is encoded in one dimensional language here.
Consequently, for $u\ne 0$ the direction $u^\perp$ is non-special if and only if
$$\widehat{\one_S}(xu)=0 \qquad\textrm{ for every } x\ne0.$$ Thus special
directions correspond, by taking orthogonal subspace, to one-dimensional subspaces meeting
$\operatorname{supp}\widehat{\one_S}\setminus\{0\}$. B\'ir\'o and Lev \cite{BL}
proved uncertainty (the support of a function or its Fourier transform is large) inequalities for functions on $\Fp^2$. Lev \cite{Lev}
deduced that a nonzero real-valued function supported on a set $S$ has at
most $|S|/2$  non-special (the author of \cite{Lev} called it perfect) directions, except when $S$ is a line and the function
is constant on it. For $f=\one_S$ with $p\mid |S|$, perfect
directions are precisely the non-special directions. In particular, for a
non-collinear $p$-point set there are at most $(p-1)/2$ perfect or equivalently  non-special
directions and hence at least $(p+3)/2$ special directions, recovering the
R\'edei--Megyesi bound.

For arbitrary cardinalities, Ghidelli \cite{ghidelli} calls a direction non-special when
every parallel line contains either $\lfloor |U|/p\rfloor$ or
$\lceil |U|/p\rceil$ points of $U$. If
$|U|=np-r$, where $1\le n\le p$ and $0\le r<p$, he proved that either $U$
is contained in the union of $n$ lines or $U$ has at least
$\left\lceil\frac{p+n+2-r}{n+1}\right\rceil$
special directions. When $r=0$, the first alternative means
that $U$ is the union of $n$ parallel lines, and the lower bound becomes
$\lceil(p+n+2)/(n+1)\rceil$.

Kiss and Somlai \cite{kisssomlai} proved that a set of cardinality divisible by $p$ cannot have
exactly two special directions, but the same result appears in a more Fourier analysis based paper of Fallon, Mayeli and Villano \cite{FMV}. Identifying $\Fp$ with
$\{0,1,\ldots,p-1\}$ and denoting
\[
 T_p=\{(a,b)\in\Fp^2:b<a\},
\]
they classified the sets with exactly three special directions. Every
set with exactly three special directions is an affine image of $T_p$ or of its complement.  The second ingredient from \cite{kisssomlai}, crucial here, is that every  projection polynomial 
of a set with $k$ special directions
has degree at most $k-2$ if $k \ge 3$. Finally, they constructed sets attaining
Ghidelli's lower bound with exactly four special directions for
$p=5,7,11$, and a $65$-point example in $\Fp^2$ for $p=13$.
Quite recently,
it was shown \cite{Wang} that there is no set of $52$-points with four special directions in $\mathbb{F}_{13}^2$ answering a question of \cite{kisssomlai}. 
Adriaensen, Sz\H{o}nyi and Weiner \cite{ASW} extended the concept of the
projection-function to multisets in $\operatorname{AG}(2,q)$,
$q=p^h$, in a modular setting. In particular, they proved the analogous
$k-2$ degree bound for mod-special projection functions, where $k$ denotes the number of mod-special directions, and obtained lower
bounds for the numbers of mod-special and ordinary special directions over
extension fields. They also explicitly asked the following question  \cite[Section~7]{ASW}.

\begin{question}\label{q1}
Does it hold that for each $k \ge 4$ and all sufficiently large primes $p$ (where the lower bound on $p$ depends on $k$), $\operatorname{AG}(2,p)$ has no sets of points with exactly $k$ special directions?
\end{question}

In this paper, we answer this question in full generality, but first, we give an affirmative answer for the case $k=4$, which is proved by our first method based on discrepancy estimates. This can be formulated as follows.
\begin{theorem}\label{thm:4directions}
There exists $T_4 \in \mathbb{N}$ such that, for every prime $p\ge T_4$, no
subset of $\Fp^2$ has exactly four special directions.
    \end{theorem}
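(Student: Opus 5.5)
We argue by contradiction: suppose $S\subseteq\Fp^2$, $|S|=np$, has exactly four special directions. By an affine change of coordinates (which preserves the count of special directions), we may assume three of them are the directions $u_1=(1,0)$, $u_2=(0,1)$, $u_3=(1,1)$, and the fourth is $u_4=(1,\lambda)$ for some $\lambda\notin\{0,1\}$. For each $i$ put $f_i=\pi_{u_i,S}-n\colon\Fp\to\mathbb Z$. The degree bound of \cite{kisssomlai} (projection polynomials have degree at most $k-2$ for $k\ge3$) says that each $f_i$, viewed as a function $\Fp\to\Fp$, agrees modulo $p$ with a polynomial of degree at most $2$. Since $0\le\pi_{u_i,S}\le p$, we also have $|f_i|\le p$ pointwise.

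\textbf{Step 1 (decomposition).} Use Fourier inversion. Since $\widehat{\one_S}$ vanishes off the four lines $\Fp u_i$ except at $0$, we get
\[
 \one_S(x)=\frac{n}{p}\cdot p\cdot\tfrac1p+\frac1{p}\sum_{i=1}^4 f_i(\langle u_i,x\rangle)\quad(\text{suitably normalised}),
\]
that is,
\[
 p\,\one_S(x)-np=\sum_{i=1}^4 f_i(\langle u_i,x\rangle).
\]
Each $f_i$ has mean zero. So $p\one_S - n p$ is a sum of four ridge functions. Reducing mod $p$ gives
\[
 \sum_i f_i(\langle u_i,x\rangle)\equiv 0\pmod p,
\]
with each $f_i$ a polynomial of degree $\le2$ mod $p$. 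Comparing the quadratic forms $\sum_i a_i\langle u_i,x\rangle^2\equiv0$ with the linear terms, we obtain a linear-algebra description: the leading coefficients $a_i$ satisfy three linear relations in four unknowns, so they are determined up to one scalar $a$ depending on $\lambda$. The quadratic parts therefore cannot all vanish unless $S$ is trivial.

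\textbf{Step 2 (discrepancy).} The integer-valued $f_i$ take values in $[-n,p-n]$ and reduce mod $p$ to $a_i t^2+b_it+c_i$. If some $a_i\ne0$, then $f_i$ is determined by the fractional parts $\{(a_it^2+b_it+c_i)/p\}$, up to integer shifts that are bounded because $|f_i|\le p$. Quadratic residues mod $p$ are equidistributed, with Weil/Gauss sum discrepancy $O(\sqrt p\log p)$. Consequently $f_i$ behaves like $p$ times a sawtooth of a quadratic. We then evaluate the identity $p\one_S(x)-np=\sum_i f_i(\langle u_i,x\rangle)$ on suitable configurations, for example summing over a line in a fifth direction, or computing second moments $\sum_x(\cdot)^2$. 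The left side takes only the two values $p-np$ and $-np$. The right side is a sum of four independent-looking sawtooth values whose joint distribution, by equidistribution of $(a_i\langle u_i,x\rangle^2+\dots)/p$ on $\Fp^2$ (2D Weyl/Gauss sums, discrepancy $O(p^{-1/2}\log^2p)$), is spread over an interval rather than concentrated on two points. Comparing the proportion of $x$ for which the right side lies in a middle window gives a contradiction for $p$ large. The degenerate case where all $a_i=0$ means each $f_i$ is affine mod $p$, and we handle it similarly with linear sawtooths, or directly via the $T_p$-type classification.

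\textbf{Main obstacle.} The hard part is controlling the integer lifts: knowing $f_i$ mod $p$ only determines $f_i$ up to multiples of $p$. Since $|f_i|\le p$, each $f_i(t)$ has at most two lifts. The plan is to show that the lift choice is forced to be "sawtooth-like" by the integrality and boundedness of $\sum_i f_i$, and then to apply the joint equidistribution. Making the joint discrepancy estimate uniform over the relations between $a_i$ and $\lambda$ (where the 2D quadratic form may be degenerate) is where I expect most of the work.
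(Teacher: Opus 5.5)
Your proposal has a genuine gap in Step~2: no mechanism is given that actually produces a contradiction, and the ones you gesture at cannot work as stated. First, a normalisation slip. With $f_i=\pi_{u_i,S}-n$ the identity is $\sum_i f_i(\langle u_i,x\rangle)=p\one_S(x)-n$, not $p\one_S(x)-np$.

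With that corrected, the integer sum lies in the coset $-n+p\mathbb Z$ for every $x$. So no $x$ ever lands in a ``middle window''. The four values are also not jointly equidistributed, since they satisfy an affine relation mod $p$. Summing over a line in a non-special direction only gives $0=0$. What must be shown is that the sum takes at least three values of that coset.

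The missing idea, which is the paper's route, is to discard one direction and work with three. Your Step~1 linear algebra shows more than you state: any three of the $\ell_i^2$ are linearly independent, so the $a_i$ are either all zero or all nonzero. In the nonzero case, every nonzero combination $\xi_1S_1+\xi_2S_2+\xi_3S_3$ is therefore a nonconstant quadratic on $\Fp^2$. Its complete exponential sum is at most $p^{3/2}$ in absolute value whatever the rank: it equals $p$ for rank two and $0$ or $p^{3/2}$ for rank one. So the uniformity in $\lambda$ you worry about is not an issue.

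Fourier inversion on interval boxes then gives joint equidistribution of $(S_1,S_2,S_3)$ in $\Fp^3$, with discrepancy $O(\log^3p/\sqrt p)$. Hence there are points $z_0,z_1$ with $S_1,S_2,S_3\le p/10$ at $z_0$ and $\ge 9p/10$ at $z_1$. Since $S_4$ can drop by at most $p-1$, $\sum_iS_i$ increases by more than $p$ from $z_0$ to $z_1$. This is impossible, because $p\one_S$ takes only the values $0$ and $p$.

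Three further corrections.

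\emph{The lifting problem is not an obstacle.} After deleting complete lines, every count lies in $\{0,\ldots,p-1\}$ and equals $[P_i(t)]_p$ exactly, which is precisely your sawtooth, with no ambiguity. The deletion is harmless here: it changes no special direction when there are four of them, and it keeps $p\mid|S|$. Even without deletion, the only ambiguity is $0$ versus $p$.

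\emph{The all-linear case cannot be handled ``similarly with linear sawtooths''.} Any three nonconstant affine functions of the form $b_i\ell_i+c_i$ on $\Fp^2$ are affinely dependent, so joint equidistribution fails. Here you genuinely need the fact that two linear projection polynomials force an affine image of $\mathcal T_p$, which has only three special directions.

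\emph{The second-moment idea you mention in passing would give a complete proof if carried out.} It needs no affine independence. By pairwise independence of distinct projections, $\Var\bigl(\sum_iS_i\bigr)=\sum_i\Var(S_i)=(4/12+o(1))p^2$. On the other hand, $\Var(p\one_S)=n(p-n)\le p^2/4$, a contradiction for large $p$. This is the paper's Section~6 argument. In your proposal, however, neither this route nor the three-direction route is actually executed.
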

In Section \ref{sec:equidistribution} we collect knowledge on discrepancy bounds. We prove Theorem \ref{thm:explicit diszkrepancia masodfokuakra} using standard methods to obtain a completely explicit discrepancy bound with respect to polynomials of small degree and collect some of the main results on discrepancy we use later in Section \ref{sec:affineextension}.

In Section \ref{sec:projection} we show how a set can be reconstructed from its special projection functions.
The proof of Theorem~\ref{thm:4directions} contained in Section \ref{sec:k4} is based on two main ingredients.
First, the projection polynomial theorem of Kiss and Somlai \cite{kisssomlai},
stated here as Proposition~\ref{projections:degree}, shows that if $S$
has $k$ special directions, then all of its projection polynomials have
degree at most $k-2$. In the case $k=4$, this reduces the relevant
projection functions to polynomials of degree at most two. The proof is a combination of the
affine-independence statement of Lemma~\ref{lem:3independent} with the
quadratic exponential-sum estimate of Lemma~\ref{lem:2.1C} and the discrepancy
estimate of Proposition~\ref{prop: diszkrepancia szamitas}.

The same method also gives the following conditional extension to larger fixed
numbers of special directions.
\begin{theorem}[Conditional extension]\label{thm:affineextension-intro}
Fix integers $k\ge4$ and $r$ such that
\[
 \frac{k+1}{2}<r\le k-1.
\]
Then there exists $T_{k,r}$ such that, for every prime $p\ge T_{k,r}$,
there is no subset $S\subseteq\Fp^2$ of cardinality divisible by $p$
with exactly $k$ special directions for which $r$ of the corresponding
auxiliary projection functions, regarded as polynomials on $\Fp^2$, are
affinely independent.
\end{theorem}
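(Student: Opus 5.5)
The plan is to turn the projection functions into an equidistribution problem in $[0,1)^r$ and find a box of fixed positive volume that equidistribution forces to be hit but that the structure of $S$ forbids.

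\emph{Step 1: reconstruction.} Let $S$ have $|S|=np$ and exactly $k$ special directions $u_1^\perp,\dots,u_k^\perp$. Put $g_i(t)=\pi_{u_i,S}(t)-n$ and $F_i(x)=g_i(\langle u_i,x\rangle)$. Since $\operatorname{supp}\widehat{\one_S}\setminus\{0\}$ lies on the lines $\Fp u_i$, Fourier inversion gives the reconstruction formula of Section~\ref{sec:projection}:
\[
 p\,\one_S(x)=n+\sum_{i=1}^k g_i(\langle u_i,x\rangle)\qquad(x\in\Fp^2),
\]
an identity of integers. By Proposition~\ref{projections:degree}, each $g_i$, read mod $p$, is a polynomial of degree at most $k-2$. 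Each $g_i$ takes integer values in $[-n,p-n]$. Put $\theta=n/p\in[0,1]$.

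\emph{Step 2: translate to the torus.} Since $g_i(t)\in[-n,p-n]$, the real number $g_i(t)/p$ is determined by the residue $y=g_i(t)/p \bmod 1$ via a fixed map $\varphi_\theta$ with values in $[-\theta,1-\theta]$. At the possible endpoint ambiguity this map is fixed consistently; any such choice changes only a boundary set of measure zero. The identity in Step 1 says
\[
 \sum_{i=1}^k \varphi_\theta\bigl(y_i(x)\bigr)\in\{-\theta,\,1-\theta\},
 \qquad y_i(x)=F_i(x)/p \bmod 1 .
\]
Suppose $F_1,\dots,F_r$ are the affinely independent ones. The other $k-r$ summands each lie in $[-\theta,1-\theta]$. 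Hence for every $x$,
\[
 -\theta-(k-r)(1-\theta)\ \le\ \sum_{i\le r}\varphi_\theta(y_i(x))\ \le\ 1-\theta+(k-r)\theta .
\]
Now compare with the two corners of $[0,1)^r$. Near $y=(1,\dots,1)$ the sum is about $r(1-\theta)$; near $y=(0,\dots,0)$, after wrapping, it is about $-r\theta$. If both corners satisfied the bounds, then adding the two constraints would give $r\le k-r+1$. This contradicts $r>(k+1)/2$. So one corner violates its bound by at least $(2r-k-1)/2\ge \tfrac12$. Therefore there is a box $B\subset[0,1)^r$ of side $\asymp 1/r$, volume $\gg_k 1$ and independent of $p$, which the point $(y_1(x),\dots,y_r(x))$ never enters.

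\emph{Step 3: equidistribution.} It remains to show that $\{(y_1(x),\dots,y_r(x)):x\in\Fp^2\}$ has discrepancy $o(1)$ as $p\to\infty$, for fixed $k$. Then $B$ must be hit for $p\ge T_{k,r}$, a contradiction. I would use the Erd\H{o}s--Tur\'an--Koksma inequality in the form of Proposition~\ref{prop: diszkrepancia szamitas} and Theorem~\ref{thm:explicit diszkrepancia masodfokuakra}. This reduces the claim to bounding, for $0\ne h\in\mathbb Z^r$ with $\|h\|_\infty\le H$, the sums
\[
 \sum_{x\in\Fp^2}e_p\Bigl(\sum_{i\le r}h_iF_i(x)\Bigr).
\]
When all $h_i\not\equiv0\pmod p$ — guaranteed if $H<p$ — affine independence of $F_1,\dots,F_r$ mod $p$ means that $\sum h_iF_i$ is a non-constant polynomial on $\Fp^2$ of degree at most $k-2$. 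Hence the sum is $O_k(p^{3/2})$. One way to see this is to choose coordinates in which $\sum h_iF_i$ depends non-trivially on one linear form, then apply Weil's bound on each fibre. Taking $H\approx p^{1/2}$ then gives discrepancy $O_k(p^{-1/2}(\log p)^r)$.

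The main obstacle I expect is exactly this exponential-sum step. One must make sure that affine independence is used over $\Fp$, which is the sense in which the $F_i$ are polynomials. One must also handle sums in which the polynomial degenerates to a function of a single linear form, where only the one-variable Weil bound $(k-3)\sqrt p$ times $p$ is available. That is still a saving of $\sqrt p$, which suffices. The combinatorial corner argument of Step 2 should be routine once $\varphi_\theta$ and its endpoint convention are set up carefully.
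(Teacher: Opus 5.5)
Your argument is in substance the paper's proof. It uses equidistribution of $(S_1,\dots,S_r)$ from a $p^{3/2}$ exponential-sum bound fed into Proposition~\ref{prop: diszkrepancia szamitas}, a point where all $r$ independent projections are simultaneously extreme, the trivial range for the other $k-r$, and a contradiction from $2r\ge k+2$. Adding your two one-sided constraints is the same arithmetic as the paper's subtraction of the reconstruction identity at $z_1$ and $z_0$.

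There is, however, a slip in Step~2 that you must fix. With $y_i=F_i/p \bmod 1$ for the \emph{centered} $F_i$, one has $\varphi_\theta(y)=y$ on $[0,1-\theta)$ and $\varphi_\theta(y)=y-1$ on $[1-\theta,1)$. So near $(0,\dots,0)$ and $(1,\dots,1)$ every summand is close to $0$, and these corners give no contradiction. The extreme values $1-\theta$ and $-\theta$ occur on the two sides of the residue $1-\theta$. There are two ways to repair this:
\begin{itemize}
\item move the boxes there; they may wrap around the torus, which the discrepancy bound tolerates;
\item more simply, use the coordinates $\pi_{u_i,S}(\langle u_i,x\rangle)/p \bmod 1$, i.e.\ $F_i+n$, which changes each exponential sum only by a unimodular factor.
\end{itemize}
With the second choice your corners are the right ones, provided the lower box is $\{1,\dots,\lfloor\varepsilon p\rfloor\}$ to avoid the residue $0$ coming from complete lines. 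Alternatively, remove complete lines first, as the paper does. Also, in Step~3 the relevant condition is $h\not\equiv0\pmod p$, not that all $h_i\not\equiv0$.

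The one genuine difference is the exponential-sum step. The paper appeals to Katz's estimate via Corollary~\ref{cor:weyl}, whereas your fibering argument is elementary. After possibly swapping variables, write $Q=\sum_j c_j(x)y^j$ with some $c_j\ne0$ for $j\ge1$. For the at most $d$ values of $x$ where $Q(x,\cdot)$ is constant, use the trivial bound $p$. For all other $x$, apply Theorem~\ref{thm:weil}. This gives $\bigl|\sum_{z\in\Fp^2}e_p(Q(z))\bigr|\le(d-1)p^{3/2}+dp$ with explicit constants. That is all that is needed in two variables, so your route removes the dependence on Deligne--Katz from the conditional theorem.
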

Theorem \ref{thm:affineextension-intro} is proved in Section~\ref{sec:affineextension}. Here the quadratic
estimate is replaced by the bounded-degree exponential-sum estimate of
Corollary~\ref{cor:weyl}, which follows from the results of Deligne and Katz.

A separate second-moment argument answers Question \ref{q1} affirmatively in Section \ref{sec:second-moment}. This gives a conclusion stronger than that of Theorem \ref{thm:4directions} and Theorem \ref{thm:affineextension-intro} using a second-moment argument.
We present Theorem~\ref{thm:fixed-k-intro} and its proof separately from the preceding results because, although its basic idea and main mathematical ingredients are due to the authors, its detailed implementation was carried out by generative AI (ChatGPT 5.6 (OpenAI)), with Proposition~\ref{prop:poly-variance} being the main AI-produced calculation. 
All other mathematical work in the paper is entirely the authors' own.

\begin{theorem}\label{thm:fixed-k-intro}
Let $p$ be a prime and let $k$ be an integer greater than $3$.
There is no subset $S\subseteq\Fp^2$ having exactly $k$ special directions if
$$
 4\le k\le \frac18\frac{\sqrt p}{2+\log p}.
$$
\end{theorem}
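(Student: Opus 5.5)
We argue by contradiction: suppose $S\subseteq\Fp^2$, $|S|=np$, has exactly $k$ special directions with $4\le k\le \frac18\frac{\sqrt p}{2+\log p}$. (A set with a special direction must have $p\mid |S|$, otherwise every direction is special; so the normalization is harmless.) The plan is to use the reconstruction of $S$ from its special projection functions together with the degree bound of Proposition~\ref{projections:degree}. Let $u_1,\dots,u_k$ be the normals of the special directions and $g_i=\pi_{u_i,S}-n$. Then $\widehat{\one_S}$ is supported on $\{0\}\cup\bigcup_i \Fp u_i$, and Fourier inversion gives the identity
\[
 \one_S(x)=n/p\cdot p\cdot\tfrac1p\,\cdots=\frac{n}{p}+\frac1p\sum_{i=1}^k g_i(\langle u_i,x\rangle),
\]
i.e. $p\,\one_S(x)-n=\sum_i g_i(\langle u_i,x\rangle)$. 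By Proposition~\ref{projections:degree}, each $g_i$ is (as a function on $\Fp$) a polynomial of degree at most $k-2$, integer-valued in $[-n,p-n]$.

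The second-moment idea: the function $F(x)=\sum_i g_i(\langle u_i,x\rangle)$ takes only the two values $p-n$ and $-n$. Hence $(F+n)(F+n-p)\equiv 0$, so $\sum_x F(x)^2$ is exactly determined: $\sum_x F^2 = np(p-n)^2+(p^2-np)n^2=np^2(p-n)$. On the other hand, by orthogonality of the distinct lines (Parseval, the supports $\Fp^\times u_i$ being disjoint) $\sum_x F^2=p\sum_i\sum_t g_i(t)^2$. So $\sum_i\|g_i\|_2^2=np(p-n)$; this alone is consistent. The real tool is a higher/mixed moment: compare $\sum_x F^2$ with $\sum_x F\cdot h$ for test functions, or more directly use Proposition~\ref{prop:poly-variance}, which I expect computes the variance of a low-degree polynomial $g$ evaluated along $\Fp$ against its restriction to a subset. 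Concretely, for fixed $i$, restrict to a line $\ell$ in direction $u_i^\perp$ (where $g_i(\langle u_i,x\rangle)$ is constant, equal to $c$). On $\ell$, $F=c+\sum_{j\ne i}g_j(\text{affine in } s)$ is a polynomial in the line parameter $s$ of degree $\le k-2$ taking only two values, so its values give a $\{0,1\}$-pattern $\one_{S\cap\ell}$ which is a degree $\le k-2$ polynomial in $s$ modulo... — but we are over $\mathbb Z$, not $\Fp$; the $g_j$ are polynomials mod $p$ lifted to integers in a window. The key estimate is Weyl/Weil: a nonconstant polynomial $P$ of degree $d<p$ has $|\sum_s e_p(aP(s))|\le (d-1)\sqrt p$, which forces the values of each $g_j\circ(\text{line})$ to be roughly equidistributed in the window, so $\sum_{j\ne i}$ of such equidistributed pieces has variance of order $\sum_j \|g_j\|^2/p$ and cannot concentrate on two points.

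So the steps are: (1) Fourier decomposition $F=\sum g_i(\langle u_i,x\rangle)$ with $\deg g_i\le k-2$ and the two-value constraint; (2) pick the $i$ with $\|g_i\|_2^2$ maximal and show via the quantitative discrepancy bound (Proposition~\ref{prop: diszkrepancia szamitas} with Corollary~\ref{cor:weyl}, errors $O(k\sqrt p\log p)$ per sum) that on a typical line transverse to $u_i$ the other terms behave like nearly independent variables, giving a lower bound for the fourth moment $\sum_x F^4$ that exceeds, by a factor depending on discrepancy, the exact value $np^2(p-n)((p-n)^2-n(p-n)+n^2)$ forced by two-valuedness unless $k\sqrt p\log p\gtrsim p$; (3) conclude the contradiction precisely when $k\le \frac18\sqrt p/(2+\log p)$. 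The main obstacle is step (2): controlling the correlations $\sum_x \prod g_{j_m}(\langle u_{j_m},x\rangle)$ for up to four distinct indices, which are genuinely two-dimensional sums; for triples of pairwise independent $u$'s these reduce to one-dimensional polynomial exponential sums, and I would handle them by expanding each $g_j$ in additive characters and applying the Weil bound, accumulating a loss of $\log p$ from the $\ell^1$ norm of $\widehat{g_j}$, which is where the $(2+\log p)$ in the hypothesis should come from.
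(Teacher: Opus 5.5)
Your proposal has a genuine gap: step (2), on which the whole argument rests, is only announced, and the estimates you propose do not deliver it. The fourth moment $\sum_xF^4$ contains about $k^4$ mixed correlations $\sum_x\prod_{m=1}^4 g_{j_m}(\langle u_{j_m},x\rangle)$ with three or four distinct directions. Pairwise independence says nothing about these, and your plan gives no other way to control them. Weil applied to each $\widehat{g_j}$, or Cauchy--Schwarz plus pairwise independence (which gives $\bigl|\sum_x\prod_{m=1}^4g_{j_m}(\langle u_{j_m},x\rangle)\bigr|\le\prod_{m=1}^4\|g_{j_m}\|_2$), shows at best that each such term is $O(p^6)$. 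That is the same order as the exact value of $\sum_xF^4$ forced by two-valuedness, so no contradiction emerges, and the threshold claimed in step (3) is never derived. Separately, you must first remove complete lines as in Section~\ref{sec:projection}. Otherwise some $a_i$ may take the value $p$, so $g_i+n$ is not the standard lift $[P_i]_p$, and $P_i$ may even be constant modulo $p$ although the direction is special.

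The missing idea is that you had already written down the decisive identity and dismissed it as ``consistent''. The relation $\sum_{i=1}^k\|g_i\|_2^2=np(p-n)\le p^3/4$ is exactly the paper's variance identity $\sum_i\Var(X_i)=n(p-n)$. It becomes contradictory once each summand is bounded from below. After removing complete lines, $g_i+n=[P_i]_p$ with $P_i$ nonconstant of degree at most $k-2$. Weil's bound $|\sum_te_p(hP_i(t))|\le(k-3)\sqrt p$ and the case $r=1$ of Proposition~\ref{prop: diszkrepancia szamitas} show that every initial segment $\{0,\dots,j\}$ receives its uniform share of the values $[P_i(t)]_p$ up to an error $\delta=(k-3)L_p/\sqrt p$. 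Summation by parts then gives $\frac1p\|g_i\|_2^2\ge p^2\bigl(\frac{p^2-1}{12p^2}-\frac\delta2-\delta^2\bigr)$. This is what Proposition~\ref{prop:poly-variance} states: it compares the variance of $[P(U)]_p$ with that of the uniform distribution, not a restriction to a subset. Summing over the $k$ special directions and writing $x=kL_p/\sqrt p$, so that $\delta=(1-\frac3k)x$, gives $\frac{k-3}{12}-\frac{k}{12p^2}\le k\bigl(\frac\delta2+\delta^2\bigr)\le(k-3)\bigl(\frac x2+x^2\bigr)$. This is impossible for $k\ge4$ once $x\le\frac18$, and since $L_p\le 2+\log p$ that is precisely the hypothesis of the theorem. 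So the logarithm comes from the $\ell^1$ norm of the Fourier transform of an interval in $\Fp$, and no fourth moments, two-dimensional correlations, or transverse lines are needed.
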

\begin{remark}
The explicit bound in Theorem~\ref{thm:fixed-k-intro} is not intended to be sharp.
For $k=4$, the displayed condition first becomes non-vacuous at
$p=207766$; hence the first prime to which it applies is $p=207769$.
\end{remark}
The quantitative bounds obtained above are far from expected to be sharp.
This leads to the following natural problem.

\begin{question}
For a fixed integer $k\ge4$, determine the smallest number $T_k$ such that,
for every prime $p>T_k$, there is no subset of $\Fp^2$ having exactly
$k$ special directions.
\end{question}

For $k=4$, examples are known for $p=5,7,11$ and $13$.
Preliminary computations suggest that no example exists for the primes
between $17$ and $100$. This motivates the following more specific question.

\begin{question}
Is there a subset of $\Fp^2$ with exactly four special directions for any
prime $p\ge17$?
Equivalently, is the optimal threshold $T_4=13$?
\end{question}

Finally, in Section~\ref{sec:multisets} we show that the situation is completely different for multisets. We prove Theorem~
\ref{thm:prescribed-multiset-directions} which states that for every odd prime
$p$ and every prescribed set
$\mathcal D\subseteq\PP^1(\Fp)$ with $|\mathcal D|\le p$, there exists a
$\{0,1,2\}$-valued multiset on $\Fp^2$ whose special directions are
exactly the elements of $\mathcal D$. The construction is based on the family of lines
which may be viewed as the tangent lines to the parabola
$y=-x^2/4$.

\section{Equidistribution and exponential-sum estimates}
\label{sec:equidistribution}

In this section we recall the analytic and number-theoretic tools that will be
used later in the proof. Our approach relies on quantitative forms of
equidistribution modulo a prime $p$, measured by the discrepancy of finite
point sets, and on exponential-sum estimates for polynomial functions.


In particular, we will use that bounded-degree polynomial maps whose
nontrivial linear combinations are nonconstant have small interval-box
discrepancy when $p$ is large. This will allow us to find points at which
several projection functions are simultaneously very small or very large.

For the second-moment argument in Section~\ref{sec:second-moment}, we also
record the classical one-variable Weil estimate.

\begin{theorem}[Weil \cite{Weil}]\label{thm:weil}
Let $P\in\Fp[x]$ be a nonconstant polynomial of degree $d<p$. Then for every
$h\in\Fp^\times$,
\[
 \left|\sum_{t\in\Fp}e_p(hP(t))\right|\le(d-1)\sqrt p.
\]
For $d=1$ the sum is $0$.
\end{theorem}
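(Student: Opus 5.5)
The case $d=1$ is immediate. If $P(t)=at+b$ with $a\ne0$, then $hP(t)$ runs over all of $\Fp$ as $t$ does, so $\sum_t e_p(hP(t))=e_p(hb)\sum_{s\in\Fp}e_p(s)=0$.

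For $d\ge2$ I would follow the standard route through $L$-functions and the Riemann hypothesis for curves over finite fields. By replacing $P$ with $hP$, assume $h=1$ and write $\psi=e_p$. For each $r\ge1$ put
\[
 S_r=\sum_{x\in\F_{p^r}}\psi\bigl(\operatorname{Tr}_{\F_{p^r}/\Fp}P(x)\bigr),\qquad L(T)=\exp\Bigl(\sum_{r\ge1}S_r\frac{T^r}{r}\Bigr).
\]
The plan has three steps.

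\textbf{Step 1: $L(T)$ is a polynomial of degree exactly $d-1$.} Factor each $x$ by its minimal polynomial, grouping points into Frobenius orbits. This gives the Euler-product identity
\[
 L(T)=\sum_{g\ \mathrm{monic}}\lambda(g)\,T^{\deg g}.
\]
Here $\lambda(g)=\psi(\sigma(g))$, where $\sigma(g)$ is the sum of $P$ evaluated at the roots of $g$ with multiplicity. By Newton's identities, $\sigma(g)$ is a polynomial in the coefficients of $g$. Fix $m\ge d$. The coefficient of $T^m$ is a sum over monic $g$ of degree $m$. The key observation is that $\sigma(g)$ depends on the top $d$ coefficients of $g$, and affinely on the $d$-th one with a nonzero multiplier involving $d\cdot\mathrm{lc}(P)$. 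Here $d<p$ is used: it guarantees $d\not\equiv0\pmod p$. Summing over that coefficient therefore kills the term, so $L$ has degree $\le d-1$. The $T^{d-1}$ coefficient is a Gauss-type sum of absolute value $p^{(d-1)/2}$, hence nonzero, so the degree is exactly $d-1$. Writing $L(T)=\prod_{j=1}^{d-1}(1-\omega_jT)$ gives $S_r=-\sum_j\omega_j^r$, and in particular
\[
 \Bigl|\sum_{t\in\Fp}e_p(P(t))\Bigr|=|S_1|\le\sum_{j=1}^{d-1}|\omega_j|.
\]

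\textbf{Step 2: link to a curve.} Consider the Artin--Schreier curve $C\colon y^p-y=P(x)$. Its smooth projective model has genus $g=(p-1)(d-1)/2$, since $\gcd(d,p)=1$ and the curve has a single point at infinity. Counting points via additive characters gives $\#C(\F_{p^r})-(p^r+1)=\sum_{a\in\Fp^\times}S_r^{(a)}$, where $S_r^{(a)}$ is the sum above for $aP$. Consequently
\[
 Z_C(T)=\frac{\prod_{a\ne0}L_a(T)}{(1-T)(1-pT)}.
\]
The degrees match, since $(p-1)(d-1)=2g$. So the reciprocal roots of each $L_a$, in particular the $\omega_j$, are among the reciprocal roots of the numerator of $Z_C$.

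\textbf{Step 3: apply the Riemann hypothesis for $C$.} By Weil's theorem, every reciprocal root of the numerator of $Z_C$ has absolute value $\sqrt p$. Hence $|\omega_j|=\sqrt p$ for all $j$, and Step 1 yields $|S_1|\le(d-1)\sqrt p$.

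The main obstacle is Step 3, the Riemann hypothesis for curves. I would take it as known, via Weil's original proof or Bombieri's Stepanov-style argument. The Stepanov method applied directly to the sum would give an elementary $O(d\sqrt p)$ bound, but not the sharp constant $d-1$. The exact constant needs both the degree count of Step 1 and the equality $|\omega_j|=\sqrt p$.
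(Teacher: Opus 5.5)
The paper gives no proof of this statement. It imports the bound from Weil as a black box and uses it only to supply the parameter $B=(d-1)\sqrt p$ in the proof of Proposition~\ref{prop:poly-discrepancy}.

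Your proposal is correct, and it is the standard derivation of the bound, essentially Weil's own argument. The steps hold as follows:
\begin{enumerate}
\item The Newton-identity argument correctly shows that $\sigma(g)$ is affine in $e_d$, with the constant nonzero coefficient $(-1)^{d-1}d\,\mathrm{lc}(P)$. Summing over $e_d$ then gives $\deg L\le d-1$.
\item The Artin--Schreier point count gives $Z_C(T)(1-T)(1-pT)=\prod_{a\ne0}L_a(T)$.
\item The Riemann hypothesis for $C$ then yields $|\omega_j|=\sqrt p$.
\end{enumerate}
Compared with the bare citation, your route makes visible where $p\nmid d$ enters and where the constant $d-1$ comes from. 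The deep input, the Riemann hypothesis for curves, is still taken as known, just as in the paper.

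One small point concerns your claim that the $T^{d-1}$ coefficient has absolute value exactly $p^{(d-1)/2}$. This is true: it follows from a triangular elimination that pairs $e_j$ with $e_{d-j}$, plus one quadratic Gauss sum in $e_{d/2}$ when $d$ is even. However, you assert it without proof, and you do not need it. The degree count $\sum_{a\ne0}\deg L_a=2g=(p-1)(d-1)$, together with $\deg L_a\le d-1$ for each $a$, already forces $\deg L_a=d-1$. For the inequality itself, $\deg L\le d-1$ alone suffices.
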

\subsection{Discrepancy bounds}
Discrepancy, which is our main tool in the proof of Theorem \ref{thm:4directions}, provides a quantitative way to measure how far a
finite sequence deviates from the uniform distribution.
Since the second and third authors had limited knowledge of this tool, and we believe this widely used tool could have other interesting applications in the discrete geometry community, we gathered a broader set of references.
For general background on uniform distribution and discrepancy theory, we
refer to the classical monograph of Kuipers and Niederreiter
\cite{KuipersNiederreiter}, the survey volume edited by Granville and Rudnick
\cite{GranvilleRudnick}, and the modern account of Niederreiter
\cite{neider}. The presentation
below follows the classical works of Koksma \cite{koksma} and Niederreiter
\cite{neider}, see also the exposition in \cite{BL}.

Let
\[
 \Gamma=\{\gamma_j=(\gamma_{1,j},\ldots,\gamma_{r,j})\in[0,1)^r:
 1\le j\le N\}
\]
be a finite sequence of $N$ points in the $r$-dimensional unit cube. For a box
\[
 B=[a_1,b_1)\times\cdots\times[a_r,b_r)\subset[0,1)^r,
\]
define the relative discrepancy of $\Gamma$ with respect to $B$ by
\[
 \Disc_\Gamma(B)=
 \left|\frac{\#(B\cap\Gamma)}{N}-\prod_{i=1}^r(b_i-a_i)\right|,
\]
and the absolute discrepancy by
\[
 \Delta_\Gamma=\sup_B\Disc_\Gamma(B),
\]
where the supremum is taken over all boxes $B\subset[0,1)^r$.

For $\mathbf a=(a_1,\ldots,a_r) \in \mathbb{Z}^r$
let $e(z)=e^{2\pi iz}$ denote the
normalized exponential function and set
\[
 S_{\mathbf a}(\Gamma)=\sum_{j=1}^N e(-\mathbf a\cdot\gamma_j),
\]
where $\mathbf a\cdot\gamma_j$ denotes the usual scalar product of vectors in
$\mathbb{R}^r$. A standard form of the Erd\H{o}s--Tur\'an--Koksma
\cite{koksma,neider} inequality gives the following estimate.

\begin{theorem}[Erd\H{o}s--Tur\'an--Koksma inequality]\label{thm:koksma}
Let $\Gamma\subset[0,1)^r$ be as above. Then for every integer $H\ge1$,
\[
 \Delta_\Gamma\le c_r\left(
 \frac1H+\frac1N
 \sum_{0<\|\mathbf a\|_\infty\le H}
 \frac{|S_{\mathbf a}(\Gamma)|}
 {\prod_{i=1}^r(1+|a_i|)}
 \right),
\]
where $c_r>0$ is an absolute constant depending only on $r$, and
$\|\cdot\|_\infty$ denotes the supnorm.
\end{theorem}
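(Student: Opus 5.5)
The statement immediately above is the Erd\H{o}s--Tur\'an--Koksma inequality. I would follow the classical smoothing argument: approximate the indicator of a box from above and below by trigonometric polynomials of degree at most $H$ in each variable, with controlled Fourier coefficients. Then sum over $\Gamma$.

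\emph{Step 1 (one dimension).} I start with an interval $I=[a,b)\subset[0,1)$ on the torus $\mathbb R/\mathbb Z$. Using the Beurling--Selberg (Vaaler) majorant and minorant, I construct trigonometric polynomials $T^\pm_I$ of degree at most $H$ with the following properties:
\[
T^-_I\le \one_I\le T^+_I,\qquad \int_0^1 (T^+_I-T^-_I)\le \frac{C}{H+1},
\]
and, for $0<|a|\le H$,
\[
|\widehat{T^\pm_I}(a)|\le \frac{C}{1+|a|}.
\]
The coefficient bound comes from $\widehat{\one_I}(a)=O(1/|a|)$ together with the fact that the Selberg correction has coefficients $O(1/H)$.

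\emph{Step 2 (product structure).} Write $\one_B=\prod_i \one_{I_i}$ and $\lambda(B)=\prod_i(b_i-a_i)$. Products of majorants do majorize the box, since all factors are nonnegative once we arrange $T^+\ge 0$. Minorants are the delicate part, because $T^-_I$ may be negative. I would use the standard telescoping device
\[
\prod_i u_i-\prod_i v_i=\sum_i\Big(\prod_{j<i} v_j\Big)(u_i-v_i)\Big(\prod_{j>i} u_j\Big),
\]
or, in the spirit of Koksma's original proof, a majorant for the complement. This yields multivariate trigonometric polynomials $P^\pm$ with $P^-\le\one_B\le P^+$. Their degree is at most $H$ in each coordinate, and
\[
\int\big(P^\pm-\one_B\big)=O_r(1/H).
\]
Their coefficients satisfy
\[
|\widehat{P^\pm}(\mathbf a)|\le C_r\prod_i (1+|a_i|)^{-1}.
\]
The coefficient bound follows from multiplicativity of Fourier coefficients of tensor products, using $|\widehat{T}(a)|\le C\min(1,1/|a|)$ with $\widehat T(0)\le 2$.

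\emph{Step 3 (summation).} Averaging over $\Gamma$ gives the chain
\[
\frac{\#(B\cap\Gamma)}{N}\le \frac1N\sum_j P^+(\gamma_j)=\widehat{P^+}(0)+\frac1N\sum_{\mathbf a\ne0}\widehat{P^+}(\mathbf a)\,\overline{S_{\mathbf a}(\Gamma)} .
\]
Here $\widehat{P^+}(0)\le\lambda(B)+O_r(1/H)$, and the sum runs over $\mathbf a$ with $\|\mathbf a\|_\infty\le H$. Inserting the coefficient bound gives the claimed upper estimate. The lower estimate follows symmetrically from $P^-$. Finally, taking the supremum over $B$ gives the inequality with a constant $c_r$.

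I expect the main obstacle to be Step 2: obtaining a genuine minorant in several variables with the product-type coefficient decay. The cleanest fix is to avoid minorants entirely. We bound $\#(B\cap\Gamma)/N-\lambda(B)$ from above using $P^+$. For the lower bound, write $\one_B$ via inclusion--exclusion as a signed sum of indicators of boxes anchored at the origin and of complements. Alternatively, apply the upper bound to the $2^r-1$ box pieces making up $[0,1)^r\setminus B$. Each piece gets a nonnegative product majorant, at a cost of a factor depending only on $r$, which is absorbed into $c_r$.
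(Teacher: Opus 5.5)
Your proposal is correct. It is the standard Beurling--Selberg (Vaaler) smoothing proof of the Erd\H{o}s--Tur\'an--Koksma inequality.

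The paper does not prove this theorem at all: it quotes it from Koksma and Niederreiter. For its applications it instead proves the direct finite-field estimate of Proposition~\ref{prop: diszkrepancia szamitas}. There the ambient group is $\Fp^r$, so $\one_T$ has an exact finite Fourier expansion and no majorants or minorants are needed. One separates $\xi=0$, bounds every nontrivial exponential sum by $B$, and uses the factorization $\|\widehat{\one_T}\|_1=\prod_j\|\widehat{\one_{I_j}}\|_1\le(pL_p)^r$ from Lemma~\ref{lem:fourier egyutthato becsles}. The factor $L_p^r$ plays the role of your weight sum $\sum_{\|\mathbf a\|_\infty\le H}\prod_i(1+|a_i|)^{-1}\asymp(\log H)^r$ with $H\approx p/2$. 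The smoothing error $1/H$ disappears because Fourier inversion on $\Fp^r$ is exact.

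Each route buys something different. The paper's route gives a two-sided bound with no unspecified constant in one stroke, but only for point sets and boxes on the grid $p^{-1}\mathbb Z^r$, which is all the paper uses. Your argument is what is needed for arbitrary real points and an arbitrary cut-off $H$. In that setting $\one_I$ has an infinite, not absolutely convergent Fourier series and must be sandwiched between trigonometric polynomials.

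What you flag as the main obstacle is in fact not one. Since $0\le\one_{I_i}\le T^+_{I_i}$ and $\one_{I_i}-T^-_{I_i}\ge0$, your telescoping identity with $u_i=T^+_{I_i}$ and $v_i=\one_{I_i}$ directly gives the explicit minorant $\prod_iT^+_{I_i}-\sum_i(T^+_{I_i}-T^-_{I_i})\prod_{j\ne i}T^+_{I_j}\le\one_B$. It has degree at most $H$ in each variable, mean $\lambda(B)-O_r(1/H)$, and coefficients $O_r\bigl(\prod_i(1+|a_i|)^{-1}\bigr)$.

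The complement route also works, provided you regard $[0,1)\setminus[a_i,b_i)$ as a single arc of the torus. Then each of the $2^r-1$ pieces is a product of arcs, to which Selberg's construction applies. The inclusion--exclusion variant with anchored boxes is not needed, and by itself it still requires lower bounds for the positively signed terms.
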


Thus equidistribution can be deduced from suitable estimates for exponential
sums.


\subsubsection{A direct discrepancy estimate for cartesian product of interavls}

To prove Proposition~\ref{prp:4directions-equid} we do not need the full
Erd\H{o}s--Tur\'an--Koksma inequality. Since the 'boxes' used in this inequality are
Cartesian products of intervals, Fourier inversion gives a direct estimate
with an explicit constant, that we believe is better than the general constant for $r=3$. In the remaining part of this section we derive discrepancy bounds in this specific setting. The argument is standard, we will separate the trivial representation and estimate the Fourier coefficients corresponding to the nontrivial representation to obtain the required estimate on the discrepancy.

Let us identify $\Fp$ with $\{0,1,\ldots,p-1\}$ and call a set of consecutive
residues an interval. Let
\[
 e_p(t):=\exp\!\left(\frac{2\pi i t}{p}\right)\qquad \text{and } \qquad
 \widehat f(\xi):=\sum_{z \in \Fp^r}f(z)e_p(-\langle  \xi , z  \rangle ).
\]
Let $ H_q=\sum_{h=1}^{q}\frac{1}{h} $.
For $q=(p-1)/2$, let
$ L_p=1+H_q$.
\begin{lemma}\label{lem:fourier egyutthato becsles}
    If $I\subseteq\Fp$ is an interval, then
\[
 \bigl\|\widehat{\one_I}\bigr\|_1
 =\sum_{h\in\Fp}\bigl|\widehat{\one_I}(h)\bigr|
 \le pL_p\le p(2+\log p).
\]
\end{lemma}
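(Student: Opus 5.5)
Write $I=\{a,a+1,\dots,a+m-1\}$ with $0\le m\le p$. The plan is to compute $\widehat{\one_I}(h)$ in closed form as a geometric sum and then bound its absolute value term by term. Splitting off the trivial frequency and summing over the nontrivial ones then gives the harmonic-sum bound $pL_p$. The final inequality $L_p\le 2+\log p$ is a separate elementary estimate on $H_q$.

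\emph{Step 1 (the zero frequency).} At $h=0$ we have $\widehat{\one_I}(0)=|I|=m\le p$. This accounts for the term $p\cdot 1$ in $pL_p=p(1+H_q)$.

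\emph{Step 2 (nonzero frequencies).} For $h\ne0$, summing the geometric series gives
\[
\bigl|\widehat{\one_I}(h)\bigr|=\left|\sum_{j=0}^{m-1}e_p(-hj)\right|=\frac{|\sin(\pi hm/p)|}{|\sin(\pi h/p)|}\le\frac{1}{|\sin(\pi h/p)|}.
\]
We represent $h$ by $\pm h'$ with $1\le h'\le q=(p-1)/2$. The bound depends only on $h'$, and each $h'$ occurs twice. Since $h'/p\in(0,1/2]$, Jordan's inequality $\sin(\pi x)\ge 2x$ for $x\in[0,1/2]$ gives $1/\sin(\pi h'/p)\le p/(2h')$. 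Summing over both signs,
\[
\sum_{h\ne0}\bigl|\widehat{\one_I}(h)\bigr|\le 2\sum_{h'=1}^{q}\frac{p}{2h'}=pH_q.
\]
Adding Step 1 yields $\|\widehat{\one_I}\|_1\le p(1+H_q)=pL_p$.

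\emph{Step 3 (the harmonic sum).} We need $L_p\le 2+\log p$, i.e. $H_q\le 1+\log p$. This follows from $H_q\le 1+\log q$ together with $q<p$.

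The only step needing care is using the symmetric range $|h'|\le q$, so that $h'/p\le 1/2$ and Jordan's inequality applies. Everything else is routine.
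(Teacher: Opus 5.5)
Your proposal is correct and follows essentially the same route as the paper. Both arguments reduce to the geometric sum, bound it by $1/|\sin(\pi h/p)|$, pair $h$ with $p-h$ and apply $\sin(\pi x)\ge 2x$ on $[0,1/2]$ to get $pH_q$, add the trivial term $m\le p$, and finish with $H_q\le 1+\log q$.
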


\begin{proof}
Let $m=|I|$. Since translation of $I$ only multiplies each Fourier coefficient by a complex number of absolute value $1$, it does not change the absolute values of the Fourier coefficients. Thus we may assume that $ I=\{0,1,\ldots ,m-1\}$.
For $h=0$, we simply have $ \widehat{\one_I}(0)=m$, so $|\widehat{\one_I}(0) |=m$.
Now let $h\ne0$. Since $e_p(-h)\ne1$, the formula for the sum of the elements of a finite geometric series gives
\[ \widehat{\one_I}(h) =\sum_{j=0}^{m-1}e_p(-hj) =\frac{1-e_p(-hm)}{1-e_p(-h)}. \]
Consequently
$ \left| \widehat{\one_I}(h)\right| =\frac{|1-e_p(-hm)|}{|1-e_p(-h)|}.$
We now calculate the two absolute values explicitly. For every real number $x$,
\[ 1-e^{-2\pi i x} =e^{-\pi i x}\bigl(e^{\pi i x}-e^{-\pi i x}\bigr) =2i\,e^{-\pi i x}\sin(\pi x), \]
and therefore
\[ |1-e^{-2\pi i x}| =2|\sin(\pi x)|. \]
Applying this identity first with $x=hm/p$ and then with $x=h/p$, we obtain
\[ |1-e_p(-hm)| =2\left|\sin\!\left(\frac{\pi hm}{p}\right)\right|
\text{ and} \quad |1-e_p(-h)| =2\left|\sin\!\left(\frac{\pi h}{p}\right)\right|. \]
Hence, for every $h\ne0$, \[ \bigl|\widehat{\one_I}(h)\bigr| = \frac{ \left|\sin\!\left(\frac{\pi hm}{p}\right)\right| }{ \left|\sin\!\left(\frac{\pi h}{p}\right)\right| } \le \frac{1}{ \left|\sin\!\left(\frac{\pi h}{p}\right)\right| } \] since the numerator is at most $1$

Since $\sin(\pi h/p)=\sin(\pi(p-h)/p)$ and
$\sin(\pi h/p)\ge 2h/p$ for $1\le h\le q$, we obtain
\[
 \sum_{h=1}^{p-1}\frac1{|\sin(\pi h/p)|}
 \le2\sum_{h=1}^{q}\frac{p}{2h}=pH_q.
\]
The term at $h=0$ is $m\le p$, proving the first inequality of the statement of this lemma. The second statement
follows from the inequality $H_q\le1+\log q$.
\end{proof}

\begin{proposition}\label{prop: diszkrepancia szamitas}
    Let $X$ be a finite set of cardinality $N$ and let
$G\colon X \to \F_p^r$, and suppose that
\[
 \left|\sum_{x\in X}e_p(\langle  \xi,G(x)  \rangle )\right|\le B
 \qquad   \text{whenever } 0 \ne \xi \in \F_p^r.
\]
Then, for every interval box $T=I_1\times\cdots\times I_r$,
\[
 \left|\#\{x\in X:G(x)\in T\}-\frac{N|T|}{p^r}\right|
 \le BL_p^r.
\]
Equivalently, its relative interval-box discrepancy is at most
$(B/N)L_p^r$.
\end{proposition}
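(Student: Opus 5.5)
The plan is to expand the indicator of the box in Fourier series on $\Fp^r$, isolate the trivial character, and control every other character by the hypothesis $B$ together with the $\ell^1$ bound of Lemma~\ref{lem:fourier egyutthato becsles}.

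Fourier inversion on $\Fp^r$ gives, for every $z\in\Fp^r$,
\[
 \one_T(z)=\frac1{p^r}\sum_{\xi\in\Fp^r}\widehat{\one_T}(\xi)\,e_p(\langle \xi,z\rangle).
\]
Apply this with $z=G(x)$ and sum over $x\in X$:
\[
 \#\{x\in X:G(x)\in T\}=\frac1{p^r}\sum_{\xi}\widehat{\one_T}(\xi)\sum_{x\in X}e_p(\langle \xi,G(x)\rangle).
\]
The term $\xi=0$ contributes $\widehat{\one_T}(0)\,N/p^r=N|T|/p^r$, which is exactly the main term. Hence the error is bounded by
\[
 \frac1{p^r}\sum_{\xi\ne0}\bigl|\widehat{\one_T}(\xi)\bigr|\,B\le\frac{B}{p^r}\,\bigl\|\widehat{\one_T}\bigr\|_1 .
\]

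Since $T$ is a product of intervals, its Fourier transform factorizes as
\[
 \widehat{\one_T}(\xi)=\prod_{i=1}^r\widehat{\one_{I_i}}(\xi_i),
\]
so the $\ell^1$ norm also factorizes:
\[
 \bigl\|\widehat{\one_T}\bigr\|_1=\prod_{i=1}^r\bigl\|\widehat{\one_{I_i}}\bigr\|_1\le (pL_p)^r .
\]
The inequality is Lemma~\ref{lem:fourier egyutthato becsles} applied to each factor. Combining with the previous display gives an error of at most $BL_p^r$, as claimed. Dividing by $N$ yields the relative form $(B/N)L_p^r$.

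There is no real obstacle. The only points to watch are the sign convention in the inversion formula, which is harmless because the hypothesis holds for every nonzero $\xi$ and $-\xi$ ranges over the same set, and the fact that dropping the $\xi=0$ term from the $\ell^1$ sum only helps.
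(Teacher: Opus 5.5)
Your proposal is correct and follows essentially the same argument as the paper's proof. Like the paper, you use Fourier inversion of $\one_T$ evaluated at $G(x)$, isolate the $\xi=0$ term as the main term $N|T|/p^r$, bound the remaining terms by $B$, and then factor $\|\widehat{\one_T}\|_1$ over the intervals and apply Lemma~\ref{lem:fourier egyutthato becsles}.
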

\begin{proof}
We apply Fourier inversion formula to the indicator function
\[
 \one_T \colon \F_p^r \to \{ 0 ,1 \}
 \qquad \text{and  } \quad
 \one_T(y)=
 \begin{cases}
  1, & y \in T,\\
  0, & y \notin T.
 \end{cases}
\]
With our convention on the normalization of the Fourier transform and using the notation introduced in equation \eqref{eq:defoffourier}
the Fourier inversion formula is
\[
 f(y)
 =
 \frac1{p^r}
 \sum_{\xi\in \F_p^r}
 \widehat f(\xi)e_p(\langle \xi,y \rangle ).
\]
Applying this to $f=\one_T$ gives, for every
$y\in \F_p^r$,
\[
 \one_T(y)
 =
 \frac1{p^r}
 \sum_{\xi\in \F_p^r}
 \widehat{\one_T}(\xi)
 e_p(\langle \xi,y \rangle ).
\]
In particular, substituting $y=G(x)$, we obtain
\begin{equation}\label{eq:indikator fuggveny inverz fourievel}
\one_T(G(x))
 =
 \frac1{p^r}
 \sum_{\xi\in \F_p^r}
 \widehat{\one_T}(\xi)
 e_p(\langle \xi,G(x) \rangle ).
\end{equation}
We now sum equation \eqref{eq:indikator fuggveny inverz fourievel} over $x\in X$. The left-hand side of equation \eqref{eq:indikator fuggveny inverz fourievel} is equal to
$ \sum_{x\in X}\one_T(G(x))$.
By the definition of the indicator function, each $x\in X$ contributes
$1$ to this sum exactly when $G(x)\in T$, and contributes $0$
otherwise. Hence
\[
 \sum_{x\in X}\one_T(G(x))
 =
 \#\{x\in X:G(x)\in T\}.
\]
In equation \eqref{eq:indikator fuggveny inverz fourievel}, the term on the right-hand side corresponding to $\xi=0$ is $\frac{N|T|}{p^r}$.
Consequently, after separating the trivial representation and changing the order of the summation, we have
\[
 \#\{x\in X:G(x)\in T\} - \frac{N|T|}{p^r} =
 \frac{1}{p^r}
 \sum_{\substack{\xi\in \F_p^r\\ \xi\ne0}}
 \widehat{\one_T}(\xi)
 \sum_{x\in X}e_p(\langle  \xi,G(x)  \rangle ).
\]
We now take absolute values of both sides of this equation. By the triangle inequality,
\begin{align*}
 \left|
 \#\{x\in X \colon G(x)\in T\}
 -
 \frac{N|T|}{p^r}
 \right|
 \le
 \frac1{p^r}
 \sum_{\substack{\xi\in \F_p^r\\ \xi\ne0}}
 \left|\widehat{\one_T}(\xi)\right|
 \left|
 \sum_{x\in X}e_p(\langle \xi,G(x) \rangle )
 \right|.
\end{align*}
By the assumption of the proposition, for every nonzero
$\xi\in \mathbb{F}_p^r$,
we may bound each term in the preceding sum by $B$. This gives us
\begin{equation*}
 \left|
 \#\{x\in X:G(x)\in T\} - \frac{N|T|}{p^r}
 \right|\le
 \frac{B}{p^r}
 \sum_{\substack{\xi\in \F_p^r\\ \xi\ne0}}
 \left|\widehat{\one_T}(\xi)\right|.
\end{equation*}
Since all terms in the last sum are nonnegative, adding the term
corresponding to $\xi=0$ can only increase the sum. Hence
$
 \sum_{\substack{\xi\in \F_p^r\\ \xi\ne0}}
 \left|\widehat{\one_T}(\xi)\right|
 \le
 \sum_{\xi\in \F_p^r}
 \left|\widehat{\one_T}(\xi)\right|$,
and consequently
\begin{equation}\label{eq:discrepancy trivial repr separation}
     \left|
 \#\{x\in X:G(x)\in T\}
 -
 \frac{N|T|}{p^r}
 \right|
 \le
 \frac{B}{p^r}
 \sum_{\xi\in \F_p^r}
 \left|\widehat{\one_T}(\xi)\right|.
\end{equation}

We have already seen above that since
$T=I_1\times\cdots\times I_r$ is Cartesian product we have
\[
 \widehat{\one_T}(\xi)
 =
 \prod_{j=1}^r\widehat{\one_{I_j}}(\xi_j)
 \qquad
 \text{for any representation } \xi=(\xi_1,\ldots ,\xi_r).
\]
It follows that
\begin{equation}\label{eq:nemsplitL1becsles}
\begin{split}
     \sum_{\xi\in \F_p^r}
 \left|\widehat{\one_T}(\xi)\right|
 &=
 \sum_{\xi_1,\ldots ,\xi_r\in\mathbb F_p}
 \prod_{j=1}^r
 \left|\widehat{\one_{I_j}}(\xi_j)\right|=
 \prod_{j=1}^r
 \sum_{\xi_j\in\mathbb F_p}
 \left|\widehat{\one_{I_j}}(\xi_j)\right|
 =
 \prod_{j=1}^r
 \bigl\| \widehat{\one_{I_j}} \bigr\|_1.
\end{split}
\end{equation}
By Lemma \ref{lem:fourier egyutthato becsles} we have
$
 \bigl\|\widehat{\one_{I_j}}\bigr\|_1
 \le pL_p$
 for j=1,\ldots, r.
Therefore by equation \eqref{eq:nemsplitL1becsles}
\[
 \sum_{\xi\in \F_p^r}
 \left|\widehat{\one_T}(\xi)\right|
 =
 \prod_{j=1}^r
 \bigl\|\widehat{\one_{I_j}}\bigr\|_1
 \le
 \prod_{j=1}^r pL_p
 =
 (pL_p)^r.
\]
Substituting this estimate into equation \eqref{eq:discrepancy trivial repr separation} gives
\begin{align*}
 \left|
 \#\{x\in X:G(x)\in T\}
 -
 \frac{N|T|}{p^r}
 \right|
 \le
 \frac{B}{p^r}(pL_p)^r
 =BL_p^r.
\end{align*}
This proves the claimed upper bound. Dividing by $N$ gives the relative discrepancy bound $\frac{B}{N}L_p^r$.
\end{proof}
\begin{lemma}\label{lem:2.1C}
    Let $p$ be odd and let $Q\in\Fp[x,y]$ be nonconstant of degree at most $2$ and let
$S(Q):=\sum_{(x,y)\in\Fp^2}e_p(Q(x,y))$. Then $|S(Q)| \le p^{3/2}$.

If $Q$ is linear, them $S(Q)$ is zero.
\end{lemma}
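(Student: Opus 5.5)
The plan is to classify $Q$ by its homogeneous quadratic part, reduce to products of one-variable Gauss sums, and then bound those sums. Write
\[
Q(x,y)=ax^2+bxy+cy^2+dx+ey+f.
\]
The constant $f$ contributes only the unimodular factor $e_p(f)$, so we may take $f=0$.

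\emph{Linear case.} If $a=b=c=0$, then $(d,e)\ne(0,0)$, and
\[
S(Q)=\Big(\sum_x e_p(dx)\Big)\Big(\sum_y e_p(ey)\Big).
\]
At least one factor is a complete sum of a nontrivial additive character, hence zero. This gives the second claim.

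\emph{Reduction of the quadratic part.} Otherwise the quadratic form $q(x,y)=ax^2+bxy+cy^2$ is nonzero. Since $p$ is odd, there is an invertible linear change of variables $M\in GL_2(\Fp)$ diagonalising $q$ to $\alpha X^2+\beta Y^2$ with $\alpha\ne0$. Such a substitution permutes $\Fp^2$, so it does not change $S(Q)$, and the linear part remains linear, say $d'X+e'Y$. The sum now factors as
\[
S(Q)=\Big(\sum_X e_p(\alpha X^2+d'X)\Big)\Big(\sum_Y e_p(\beta Y^2+e'Y)\Big).
\]

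\emph{Bounding the factors.} For $\alpha\ne0$, completing the square (using that $2$ is invertible) gives
\[
\alpha X^2+d'X=\alpha\,(X+d'/(2\alpha))^2-d'^2/(4\alpha).
\]
So the first factor equals a unimodular constant times the Gauss sum $G(\alpha)=\sum_X e_p(\alpha X^2)$, and $|G(\alpha)|=\sqrt p$. This is the classical evaluation: $|G|^2=\sum_{u,v}e_p(\alpha(u^2-v^2))$, and the substitution $u=v+w$ turns it into $\sum_w e_p(\alpha w^2)\sum_v e_p(2\alpha vw)=p$. Alternatively it follows from Theorem~\ref{thm:weil} with $d=2$. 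For the second factor there are three cases:
\begin{itemize}
\item if $\beta\ne0$, it also has absolute value $\sqrt p$, so $|S(Q)|=p\le p^{3/2}$;
\item if $\beta=0$ and $e'\ne0$, it vanishes, so $S(Q)=0$;
\item if $\beta=0$ and $e'=0$, it equals $p$, so $|S(Q)|=p^{3/2}$.
\end{itemize}
In every case $|S(Q)|\le p^{3/2}$.

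There is no real obstacle. The only care needed is to justify the diagonalisation of a binary quadratic form over $\Fp$ with $p$ odd, and to note that the linear substitution is a bijection of $\Fp^2$. Concretely: if $a\ne0$, complete the square in $x$; if $a=0$ but $c\ne0$, swap the variables; if $a=c=0$ and $b\ne0$, use $xy=\big((x+y)^2-(x-y)^2\big)/4$.
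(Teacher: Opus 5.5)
Your proposal is correct and follows essentially the same route as the paper. You diagonalise the quadratic part (which needs $p$ odd), complete the square to reduce to one-variable Gauss sums of modulus $\sqrt p$, and split into the rank-$2$ case ($|S(Q)|=p$) and the rank-$1$ case ($S(Q)=0$ or $|S(Q)|=p^{3/2}$ according to the residual linear coefficient), while also explicitly justifying the diagonalisation and the evaluation $|G(\alpha)|=\sqrt p$, which the paper takes for granted.
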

\begin{proof}
If $Q$ is nonconstant linear, summing first in a variable with a nonzero
coefficient gives $S(Q)=0$.

Suppose now that $Q$ has nonzero quadratic part. Write
\[
 Q(x_1,x_2)=Q_2(x_1,x_2)+Q_1(x_1,x_2)+Q_0,
\]
where $Q_2$ is homogeneous of degree two, $Q_1$ is linear, and
$Q_0\in\mathbb F_p$ is constant.
The constant term $Q_0$ only contributes the factor $e_p(Q_0)$,
which has absolute value one, so it does not affect $|S(Q)|$.

Since $p$ is odd, after an invertible linear change of variables the
quadratic form $Q_2$ can be diagonalized. As an invertible change of
variables permutes the elements of $\mathbb F_p^2$, it does not change
the value of the complete sum. Thus it is enough to consider two cases depending on the possible ranks of
$Q_2$.

First suppose that $\operatorname{rank}(Q_2)=2$. Then, after a linear
change of variables,
\[
 Q(u,v)=au^2+bv^2+cu+dv+e,
 \qquad a,b\ne0.
\]
Hence
\begin{equation}\label{eq:splitS(Q)}
 \begin{split}
      S(Q)= e_p(e)
 \left(\sum_{u\in\mathbb F_p}e_p(au^2+cu)\right)
 \left(\sum_{v\in\mathbb F_p}e_p(bv^2+dv)\right).
 \end{split}
\end{equation}
The linear terms can be removed by completing the square
since $a \ne 0$ and $b \ne 0$:
and therefore
\begin{align*}
 \sum_{u\in\mathbb F_p}e_p(au^2+cu)
 &=
 e_p\left(-\frac{c^2}{4a}\right)
 \sum_{u\in\mathbb F_p}
 e_p\left(a\left(u+\frac{c}{2a}\right)^2\right) =
 e_p\left(-\frac{c^2}{4a}\right)
 \sum_{u\in\mathbb F_p}e_p(au^2).
\end{align*}
In the last equality we used that translation by $c/(2a)$ is a
bijection of $\mathbb F_p$. Thus
\begin{equation}\label{eq:gyokp1}
     \left|
 \sum_{u\in\mathbb F_p}e_p(au^2+cu)
 \right|
 =
 \left|
 \sum_{u\in\mathbb F_p}e_p \left(au^2 \right)
 \right|=\sqrt p,
\end{equation}
since the sum on the right is a  Gauss sum.
The same argument gives
\begin{equation}\label{eq:gyokp2}
     \left|
 \sum_{v\in\mathbb F_p}e_p(bv^2+dv)
 \right|
 =\sqrt p.
\end{equation}
and then it follows from equations \eqref{eq:splitS(Q)}, \eqref{eq:gyokp1} and \eqref{eq:gyokp2} that
\begin{equation}\label{eq:S(Q) becsles rank 2}
|S(Q)| = \sqrt{p} \,\sqrt{p} =p.
\end{equation}
Now suppose that $\operatorname{rank}(Q_2)=1$, which means that it has a nontrivial kernel. After an invertible
linear change of variables we may write
\[
 Q(u,v)=au^2+cu+dv+e,\]
where  $a \ne 0$ and
where $v$ is the null direction of the quadratic part corresponding to the kernel of $Q_2$. The complete
sum of equation \eqref{eq:splitS(Q)} factors as
\begin{align*}
 S(Q) =
 e_p(e)
 \left(\sum_{u\in\mathbb F_p}e_p(au^2+cu)\right)
 \left(\sum_{v\in\mathbb F_p}e_p(dv)\right).
\end{align*}
Since the sum of all $p$-th roots of unity is 0 we have
\[
 \sum_{v \in \F_p}e_p(dv) =
 \begin{cases}
  p,& d=0,\\
  0,& d \ne 0.
 \end{cases}
\]
Therefore, in this case,
\[
 |S(Q)|
 =
 \begin{cases}
  p\sqrt p=p^{3/2},& d=0,\\
  0,& d\ne0.
 \end{cases}
\]
In particular,
\begin{equation}\label{eq:S(Q) rank 1}
 |S(Q)|\le p^{3/2}.
\end{equation}
Combining the two cases, equations \eqref{eq:S(Q) becsles rank 2} and \eqref{eq:S(Q) rank 1} we obtain
\[
 \left|
 \sum_{x_1,x_2\in\mathbb F_p}e_p(Q(x_1,x_2))
 \right|
 \le p^{3/2}.
\]
%
\end{proof}
As a corollary we obtain the following theorem.
\begin{theorem}\label{thm:explicit diszkrepancia masodfokuakra}
Let
$G=(G_1,G_2,G_3)\colon \Fp^2 \to \Fp^3$. Assume that every nonzero
linear combination of $G_1,G_2,G_3$ is a nonconstant polynomial of degree at most 2.
Then
\begin{equation}\label{equation:diszkrepancia becsles kiszamolva}
     \sup_{T=I_1\times I_2\times I_3}
 \left|\frac{\#G^{-1}(T)}{p^2}-\frac{|T|}{p^3}\right|
 \le\frac{L_p^3}{\sqrt p}
 \le\frac{(2+ \log{p})^3}{\sqrt{p} }.
\end{equation}
\end{theorem}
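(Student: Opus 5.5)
The theorem is a direct application of Proposition~\ref{prop: diszkrepancia szamitas} with $X=\Fp^2$, $N=p^2$, $r=3$ and the map $G=(G_1,G_2,G_3)$. The only input needed is a uniform bound $B$ on the exponential sums $\sum_{x\in\Fp^2}e_p(\langle \xi,G(x)\rangle)$ over all nonzero $\xi\in\Fp^3$. Lemma~\ref{lem:2.1C} will supply $B=p^{3/2}$.

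First I will fix $0\ne\xi=(\xi_1,\xi_2,\xi_3)\in\Fp^3$ and set
\[
Q_\xi:=\langle \xi,G\rangle=\xi_1G_1+\xi_2G_2+\xi_3G_3 .
\]
This is a nonzero linear combination of $G_1,G_2,G_3$. By hypothesis it is therefore a nonconstant polynomial in $\Fp[x,y]$ of degree at most $2$. Here I read the $G_i$ as polynomials of degree at most two, so that $Q_\xi$, viewed as a function on $\Fp^2$, is given by such a polynomial. Lemma~\ref{lem:2.1C} then gives
\[
\Bigl|\sum_{x\in\Fp^2}e_p(Q_\xi(x))\Bigr|\le p^{3/2},
\]
since $p$ is odd. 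This bound is uniform in $\xi\neq 0$, so the hypothesis of Proposition~\ref{prop: diszkrepancia szamitas} holds with $B=p^{3/2}$.

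Next, Proposition~\ref{prop: diszkrepancia szamitas} yields, for every interval box $T=I_1\times I_2\times I_3$,
\[
\Bigl|\#G^{-1}(T)-\frac{p^2|T|}{p^3}\Bigr|\le p^{3/2}L_p^3 .
\]
Here $\#\{x\in X: G(x)\in T\}=\#G^{-1}(T)$. Dividing by $p^2$ gives the first inequality of \eqref{equation:diszkrepancia becsles kiszamolva}, namely $L_p^3/\sqrt p$. The bound does not depend on $T$, so it survives taking the supremum.

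Finally, the second inequality follows from $L_p=1+H_q\le 2+\log q\le 2+\log p$, which is the estimate already used in Lemma~\ref{lem:fourier egyutthato becsles}.

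There is no real obstacle. The only point needing care is the interpretation of the hypothesis: "nonconstant" must be meant as a function on $\Fp^2$, equivalently as a reduced polynomial, which is automatic in degree $\le2<p$. This guarantees that Lemma~\ref{lem:2.1C} applies to every nonzero $\xi$ and never meets the constant case, where the sum would have size $p^2$.
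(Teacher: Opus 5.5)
Your proposal is correct and matches the paper's own route. The paper obtains this theorem as a direct corollary, exactly as it later does in the proof of Proposition~\ref{prp:4directions-equid}: Lemma~\ref{lem:2.1C} bounds every sum $\sum_{z}e_p(\langle\xi,G(z)\rangle)$, $\xi\neq0$, by $B=p^{3/2}$, and Proposition~\ref{prop: diszkrepancia szamitas} is then applied with $N=p^2$, $r=3$, followed by $L_p\le 2+\log p$.
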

The advantage of this theorem is that there is no unspecified discrepancy constant in equation \eqref{equation:diszkrepancia becsles kiszamolva}.
\subsection{Exponential-sum estimates}
 We next recall the exponential-sum bounds that will be used below. The sharp classical estimate of Deligne applies when the
highest-degree homogeneous part of the polynomial is nonsingular.
\begin{theorem}[Deligne \cite{deligne}]\label{thm:Deligne}
Let $f\in\Fp[x_1,\ldots,x_r]$ be a polynomial of total degree $d$ with
$p\nmid d$ and let $f_d$ denote its homogeneous part of degree $d$. If the projective hypersurface $\{f_d=0\}\subset\PP_{\Fp}^{r-1}$ is
nonsingular, then
\[
 \left|\sum_{x\in\Fp^r} e_p(f(x))\right|
 \le(d-1)^r p^{r/2}.
\]
\end{theorem}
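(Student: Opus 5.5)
This is a classical theorem, and in the paper it is only quoted. The route I would follow to prove it is Deligne's original argument in \emph{La conjecture de Weil I}, carried out in the $\ell$-adic setting.

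Fix a prime $\ell\ne p$ and a nontrivial additive character $\psi=e_p$ of $\Fp$. Let $\mathcal L_\psi$ be the Artin--Schreier sheaf on $\mathbb A^1$, and put $\mathcal F=f^*\mathcal L_\psi$ on $\mathbb A^r_{\Fp}$. By the Grothendieck--Lefschetz trace formula,
\[
 \sum_{x\in\Fp^r}e_p(f(x))=\sum_{i}(-1)^i\operatorname{Tr}\bigl(\mathrm{Frob}\mid H^i_c(\mathbb A^r_{\overline{\mathbb F}_p},\mathcal F)\bigr).
\]
The theorem therefore follows from three cohomological facts:
\begin{itemize}
\item[(a)] $H^i_c=0$ for $i\ne r$;
\item[(b)] $\dim H^r_c=(d-1)^r$;
\item[(c)] $H^r_c$ is pure of weight $r$, so every Frobenius eigenvalue has absolute value $p^{r/2}$.
\end{itemize}
Indeed, (a) reduces the trace formula to the single group $H^r_c$, and (b) together with (c) then gives the bound $(d-1)^r p^{r/2}$.

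For (a) and (c), I would compactify $\mathbb A^r\subset\PP^r$, with hyperplane at infinity $H_\infty\cong\PP^{r-1}$. The hypotheses that $p\nmid d$ and that $\{f_d=0\}$ is smooth in $H_\infty$ say that $\mathcal F$ has controlled (``clean'') ramification along $H_\infty$. Its Swan conductor is $d$ generically, and the degeneracy locus is the smooth divisor $\{f_d=0\}$. A local computation at infinity from these conditions shows that the natural map $H^i_c(\mathbb A^r,\mathcal F)\to H^i(\mathbb A^r,\mathcal F)$ is an isomorphism for every $i$. Since $\mathbb A^r$ is affine, Artin vanishing gives $H^i=0$ for $i>r$. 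Poincaré duality, applied to the dual sheaf $\mathcal L_{\bar\psi}(f)$, which satisfies the same hypotheses, gives $H^i_c=0$ for $i<r$. This proves (a). For (c), Deligne's main theorem (Weil I/II) says that $H^r_c$ is mixed of weights $\le r$ and $H^r$ is mixed of weights $\ge r$. Since the two groups coincide, $H^r_c$ is pure of weight $r$. If this local comparison proves awkward to set up directly, I would instead run an induction on $r$ through a Lefschetz pencil of hyperplane sections. The smoothness of $\{f_d=0\}$ is exactly what makes a generic hyperplane section inherit the hypothesis in dimension $r-1$.

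For (b), by (a) the dimension equals $(-1)^r\chi_c(\mathbb A^r,\mathcal F)$. This Euler characteristic is locally constant in families of $f$ satisfying the hypotheses, because the ramification at infinity stays clean along such a family. I would therefore deform $f$ to the diagonal polynomial $x_1^d+\cdots+x_r^d$. Its sum factors into one-variable sums $\sum_x e_p(x^d)$, and each of these has $d-1$ eigenvalues; this is the $d-1$ in Theorem~\ref{thm:weil}, coming from multiplicative characters of order dividing $d$. The Künneth formula then gives $(d-1)^r$. Alternatively, the Grothendieck--Ogg--Shafarevich formula with the Swan conductor $d$ along $H_\infty$ yields the same number.

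The main obstacle is step (a), specifically the comparison at infinity that forces $H^*_c=H^*$. This is where both hypotheses ($p\nmid d$ and nonsingularity of $f_d$) are genuinely used, and I expect it to require the most care. Once (a) is in place, purity is a formal consequence of Deligne's weight theorem, and the count in (b) is a comparatively routine deformation argument.
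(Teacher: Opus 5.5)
The paper gives no proof of this statement and simply quotes it from Deligne (Weil~I, Th\'eor\`eme~8.4). Your outline is the standard cohomological proof of exactly that cited result: concentration of $H^*_c(\mathbb{A}^r,\mathcal{L}_\psi(f))$ in degree $r$ via the forget-supports isomorphism, Artin vanishing and Poincar\'e duality; purity by comparing the weights of $H^r_c$ and $H^r$; and rank $(d-1)^r$ by deforming to $x_1^d+\cdots+x_r^d$. So it is correct and in line with the paper.

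There are two minor caveats. First, the weight bound you invoke in (c) is a Weil~II statement, so this is the later streamlined form of the argument rather than literally Deligne's Weil~I proof. Second, the real input in (b) is that $R^r\pi_!\mathcal{L}_\psi(F)$ is lisse for the universal polynomial $F$ over the connected space of degree-$d$ polynomials with smooth leading form; the Grothendieck--Ogg--Shafarevich formula is a curve formula and does not by itself yield $(d-1)^r$ when $r\ge2$.
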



 For singular highest-degree parts we use the following affine-space specialization of Katz's estimate for singular exponential sums, see also Cluckers--Nguyen \cite{CN, N}.

\begin{theorem}[Katz, Theorem 4 of {\cite{Ka}}]\label{thm:CNK}
Let $f\in\Fp[x_1,\ldots,x_r]$ be a nonconstant polynomial of degree $d$ with $p\nmid d$. Let
$s=s(f)$ be the dimension of the singular locus of projective hypersurface $\{f_d=0\}\subset\PP_{\Fp}^{r-1}$, with the convention that $s=-1$ if the singular locus is empty. Then
\[
 \left|\sum_{x\in\Fp^r} e_p(f(x))\right|
 \le C_{r,d}p^{(r+1+s)/2},
\]
where $C_{r,d}>0$ depends only on $r$ and $d$.
\end{theorem}


When $s=-1$, that is, when the projective hypersurface defined by $f_d$
is nonsingular, Katz's estimate recovers Deligne's bound up to the value
of the constant. In general, since $s\le r-2$, it gives the following
uniform consequence.

\begin{corollary}\label{cor:weyl}
For every fixed $r,D\ge 1$ there is a constant $C_{r,D}>0$ such that whenever $p>D$ and $f\in\Fp[x_1,\ldots,x_r]$ is a nonconstant polynomial of degree at most $D$
\[
 \left|\sum_{x\in\Fp^r}e_p(f(x))\right|
 \le C_{r,D}p^{r-1/2},
\]
where $C_{r,D}$ depends only on $r$ and $D$.
\end{corollary}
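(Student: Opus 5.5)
The statement immediately above is Corollary~\ref{cor:weyl}. I would derive it from Theorem~\ref{thm:CNK}, splitting according to whether $p$ divides the degree and whether $f$ is linear. Let $f$ be nonconstant of degree $d$ with $1\le d\le D<p$. Since $d\le D<p$, we have $p\nmid d$, so Katz's theorem applies to $f$ directly.

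The singular locus of the projective hypersurface $\{f_d=0\}\subset\PP^{r-1}_{\Fp}$ is a proper closed subset of $\PP^{r-1}$. Hence its dimension satisfies $s\le r-2$, and this holds also when $f_d$ has repeated factors, in which case the singular locus may be a whole component. Theorem~\ref{thm:CNK} then gives
\[
\left|\sum_{x\in\Fp^r}e_p(f(x))\right|\le C_{r,d}\,p^{(r+1+s)/2}\le C_{r,d}\,p^{(2r-1)/2}=C_{r,d}\,p^{r-1/2}.
\]
It remains to set $C_{r,D}:=\max_{1\le d\le D}C_{r,d}$, which depends only on $r$ and $D$.

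Two edge cases need care. When $r=1$, $\PP^0$ is a point and $f_d=a x^d$ with $a\neq0$ never vanishes there, so the hypersurface is empty and $s=-1$. The bound then reads $C p^{1/2}$, which agrees with $p^{r-1/2}$; alternatively one can invoke Theorem~\ref{thm:weil}. When $d=1$, the sum is $0$, as in the proof of Lemma~\ref{lem:2.1C}, so the bound holds trivially. This also avoids any worry about how Katz's conventions treat linear forms.

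The main point to check is that $s\le r-2$ in every case, since that is what the whole estimate rests on. If $f_d$ vanishes identically on a component, for instance $f_d=g^2$, the singular locus contains the hypersurface $\{g=0\}$, which has dimension exactly $r-2$. So the inequality still holds, but without room to spare, and that boundary case is what produces the exponent $r-1/2$ rather than something better.
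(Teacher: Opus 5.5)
Your proposal is correct and follows essentially the same route as the paper, which simply applies Katz's bound (Theorem~\ref{thm:CNK}) with the observation $s\le r-2$ to get the exponent $(r+1+r-2)/2=r-1/2$. Your additional details (that $p\nmid d$ follows from $d\le D<p$, taking $C_{r,D}=\max_{d\le D}C_{r,d}$, and the edge cases $r=1$ and $d=1$) are valid refinements of that same argument.
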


\begin{remark}
The four-direction argument below only requires the explicit quadratic
estimate of Lemma~\ref{lem:2.1C}, which yields the explicit interval-box discrepancy
bound in Subsection~2.1.1. Corollary~\ref{cor:weyl} will instead be used
in Section~\ref{sec:affineextension} for the conditional extension to
affinely independent families of projection functions of arbitrary bounded
degree.
\end{remark}

\section{Projection functions}\label{sec:projection}
In this section we investigate how the elements of a finite set
$S\subset\Fp^2$ are distributed along parallel lines, and how this
distribution can be described algebraically.

\subsection{Definition and basic properties}

For each direction $\mathbf m=(m_1,m_2)\in\PP^1(\Fp)$ consider the family of
parallel lines
\[
 \ell_{a,\mathbf m}=\{(x,y)\in\Fp^2:m_1x+m_2y=a\},\qquad a\in\Fp.
\]
There are exactly $p$ such lines in each direction. For a given subset
$S\subset\Fp^2$ we define the \emph{projection function} (or counting
function) in direction $\mathbf m$ by
\[
 \pi_{\mathbf m,S}(a)=\#(S\cap\ell_{a,\mathbf m}),\qquad a\in\Fp.
\]
Thus $\pi_{\mathbf m,S}\colon\Fp\to\{0,1,\ldots,p\}$ records the number of
points of $S$ on each line parallel to $\mathbf m$. We call
$\pi_{\mathbf m,S}$ \emph{equidistributed} if it is constant. Equivalently,
$\mathbf m$ is an equidistributed or non-special direction for $S$ if all
parallel lines contain the same number of points of $S$.

If $\pi_{\mathbf m,S}(a)=p$ for some $a$, then $S$ contains the entire line $L=\ell_{a,\mathbf m}$. For any direction different from that of $L$, removing $L$ decreases every projection value by exactly one, and therefore does not change whether that direction is special. In the direction of $L$, the type of the direction can only change in the degenerate cases $S=L$ or $S=\Fp^2$. Since below we are interested in sets with four special directions, neither case can occur. We may therefore remove complete (parallel) 
lines repeatedly without changing the set of special directions and assume from now on that
\[
 0 \le \pi_{\mathbf m,S}(a)\le p-1
 \quad \text{for all} \quad a\in\Fp \text{ and all directions } \mathbf{m}.
\]
Notice that this operation also preserves the divisibility of $|S|$ by $p$.

Now we may assume that no complete line is contained in $S$ and hence the projection functions take values in $\{0,1,\ldots, p-1\}$, which can be naturally identified with $\F_p$. Using these identifications, the projection functions can be expressed as polynomials in $\Fp$.

Over the finite field $\Fp$ we have $x^p=x$ for every $x\in\Fp$, hence every function
$\Fp\to\Fp$ can be represented uniquely by a polynomial of degree at most
$p-1$. Under this identification the projection functions are viewed as
polynomials of degree at most $p-1$, which we call the \emph{projection
polynomials} of $S$.


Invertible linear transformations act transitively on directions in $\Fp^2$ and preserve the number of special directions. They also permute the Fourier coefficients of the characteristic function, and in particular preserve the size and the 'linear structure' of the support of its Fourier transform. Hence all arguments below may be performed after applying a convenient invertible linear map.

Let $g_{\mathbf m}$ be an invertible linear transformation sending the
direction vector $\mathbf m=(m_1,m_2)$ to $(0,1)$. Then
$\pi_{\mathbf m,S}(u)=\pi_{1,Sg_{\mathbf m}}(u)$, so all projection functions
can be reduced to the vertical case by an affine change of coordinates.

\subsection{Degree of projection polynomial and number of special directions}

It follows from the results of Section~2 that a polynomial of small degree
over $\Fp$ gives rise to a distribution close to uniform. Hence the degree of
a projection polynomial may be viewed as a quantitative measure of how far
the corresponding direction is from being equidistributed. The second and third authors
\cite{kisssomlai} observed that this degree is closely related to the number
of special directions.

\begin{proposition}[{\cite{kisssomlai}}]\label{projections:degree}
Let $S\subseteq\Fp^2$ be a set with $k\ge2$ special directions. Then every
projection polynomial of $S$ has degree at most $k-2$.
\end{proposition}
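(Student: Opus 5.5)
The plan is to compute the coefficients of a projection polynomial by power sums, show that these power sums are binary forms in the direction, and use the fact that such forms vanish at every non-special direction.

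\emph{Step 1: coefficients as power sums.} For a function $f\colon\Fp\to\Fp$ with interpolating polynomial $f(t)=\sum_{i=0}^{p-1}c_it^i$, I would use
\[
 \sum_{t\in\Fp}t^{s}\equiv\begin{cases}-1,& s>0,\ (p-1)\mid s,\\ 0,&\text{otherwise},\end{cases}
\]
with the convention $0^0=1$. This gives, for $0\le j\le p-2$,
\[
 \sum_{t\in\Fp}f(t)t^j=-c_{p-1-j}.
\]
Indeed, $i+j$ is a positive multiple of $p-1$ with $0\le i\le p-1$ exactly when $i=p-1-j$. The case $j=0$, $i=p-1$ is covered because $p-1>0$. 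Hence $\deg f\le k-2$ if and only if $\sum_t f(t)t^j=0$ for all $0\le j\le p-k$. It therefore suffices to prove these vanishings for $f=\pi_{\mathbf m,S}$ and every direction $\mathbf m$.

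\emph{Step 2: the power sums are binary forms.} Grouping the points of $S$ by the line $\ell_{a,\mathbf m}$ they lie on, and reducing modulo $p$, I get
\[
 P_j(\mathbf m):=\sum_{a\in\Fp}\pi_{\mathbf m,S}(a)\,a^j=\sum_{(x,y)\in S}(m_1x+m_2y)^j .
\]
The right-hand side is a homogeneous polynomial of degree $j$ in $(m_1,m_2)$ with coefficients in $\Fp$. Rescaling $\mathbf m$ by $\lambda$ changes $P_j$ by the factor $\lambda^j$, so its vanishing depends only on the point $\mathbf m\in\PP^1(\Fp)$.

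\emph{Step 3: vanishing at non-special directions.} If $\mathbf m$ is non-special, then $\pi_{\mathbf m,S}\equiv n$ with $|S|=np$. Thus $P_j(\mathbf m)=n\sum_a a^j$. This is $0$ for $1\le j\le p-2$, and for $j=0$ it equals $np\equiv0$. So $P_j$ vanishes at all $p+1-k$ non-special directions.

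\emph{Step 4: counting zeros.} A nonzero binary form of degree $j$ over $\Fp$ has at most $j$ zeros in $\PP^1(\Fp)$, since each zero contributes a distinct linear factor. Hence if $j<p+1-k$, that is $j\le p-k$, we get $P_j\equiv0$ identically. In particular $P_j$ vanishes at the special directions as well, and Step 1 gives $\deg\pi_{\mathbf m,S}\le k-2$ for every $\mathbf m$.

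\emph{Main obstacle.} The delicate points are bookkeeping rather than substance.
\begin{itemize}
\item The projection values must be read in $\Fp$. After removing full lines as above, the values lie in $\{0,\dots,p-1\}$. In any case, reducing modulo $p$ is harmless, because $P_j$ is defined through that reduction.
\item The range of $j$ must match exactly: $j\le p-k$ is precisely what controls the coefficients of degrees $k-1,\dots,p-1$.
\item The hypothesis $k\ge2$ ensures $p-k\le p-2$, so every $j$ in that range lies where Step 1 applies.
\end{itemize}
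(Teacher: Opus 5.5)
Your proof is correct and complete. The following steps all check out:
\begin{itemize}
\item the coefficient extraction $\sum_{t}f(t)t^j=-c_{p-1-j}$ for $0\le j\le p-2$;
\item the identity $\sum_a\pi_{\mathbf m,S}(a)a^j=\sum_{(x,y)\in S}(m_1x+m_2y)^j$ in $\Fp$;
\item its vanishing at the $p+1-k$ non-special directions;
\item the zero count for nonzero binary forms of degree $j\le p-k$.
\end{itemize}
Reading the projection values modulo $p$ also correctly sidesteps the removal of complete lines. The paper does not prove this proposition at all; it is quoted from \cite{kisssomlai}, so there is no in-paper argument to match.

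The argument one would assemble from the paper's own tools works with the special directions rather than the non-special ones. Write $f_i$ for the projection polynomial in the $i$-th special direction and $\ell_i$ for the corresponding linear form. Reducing \eqref{eq4.3} modulo $p$ gives $\sum_{i=1}^k f_i(\ell_i(x,y))=(k-1)n$. This is a polynomial identity, because its total degree is at most $p-1$. Let $m\ge1$ be the largest $\deg f_i$. The degree-$m$ homogeneous part is then a relation with nonzero coefficients among the forms $\ell_i^m$ with $\deg f_i=m$. The binomial/Vandermonde computation in the proof of Lemma~\ref{lem:3independent} shows that any collection of at most $m+1$ such powers is linearly independent. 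Hence at least $m+2$ of them occur, and $m\le k-2$.

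The two proofs are dual versions of the same linear algebra on binary forms of degree less than $p$. Yours uses that a nonzero form has few zeros; the other uses that few powers of pairwise non-proportional linear forms are independent.

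What each route buys:
\begin{itemize}
\item Yours needs only the definition of a non-special direction, not the reconstruction identity. Taken one step further, it gives extra information. For $k\ge3$ and $j=p+1-k$, the form $P_j$ vanishes at the $j$ non-special directions, so it is a scalar multiple of the product of the corresponding linear forms. Therefore the coefficient of $t^{k-2}$ is either zero in every special direction or nonzero in every one.
\item The special-direction route instead shows that the maximal degree $m$ is attained in at least $m+2$ special directions. It also ties directly into the affine relation among the $S_i$ exploited in Section~\ref{sec:affineextension}.
\end{itemize}
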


\subsection{Representation by projection functions}

The set $S$ is completely determined by the Fourier transform of its
characteristic function, which in turn can be expressed in terms of the
projection functions. For this we recall another result of
\cite{kisssomlai}.

\begin{proposition}[{\cite{kisssomlai}}]\label{projections:combination}
Let $S\subseteq\Fp^2$ be a set with $d$ special directions. Then the
characteristic function $\one_S$ is a rational linear combination of
characteristic functions of lines parallel to the special directions of $S$.
\end{proposition}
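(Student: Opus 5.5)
We will prove the statement through the Fourier description of special directions from the introduction. Write $|S|=np$ and let $\mathbf m_1,\dots,\mathbf m_d$ be the special directions. For each $i$ fix a nonzero vector $u_i$ with $u_i^\perp=\mathbf m_i$. By the identity $\widehat{\pi_{u,S}}(x)=\widehat{\one_S}(xu)$ and the characterisation of non-special directions, the support of $\widehat{\one_S}$ lies in
\[
\{0\}\cup\bigcup_{i=1}^d \Fp^\times u_i .
\]
Hence Fourier inversion gives
\[
\one_S(y)=\frac1{p^2}\sum_{\xi}\widehat{\one_S}(\xi)e_p(\langle\xi,y\rangle)
=\frac{np}{p^2}+\frac1{p^2}\sum_{i=1}^d\sum_{x\ne0}\widehat{\one_S}(xu_i)\,e_p(x\langle u_i,y\rangle).
\]

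The next step is to reinterpret the inner sum over $x\neq0$ by one-dimensional Fourier inversion for $\pi_i:=\pi_{u_i,S}$. Since $\widehat{\pi_i}(0)=np$, that inner sum equals $p\,\pi_i(\langle u_i,y\rangle)-np$. Substituting this, we obtain
\[
\one_S(y)=\frac1p\sum_{i=1}^d \pi_i(\langle u_i,y\rangle)-\frac{(d-1)n}{p}.
\]
Here $\pi_i(\langle u_i,y\rangle)=\sum_{a}\pi_i(a)\one_{\ell_{a,\mathbf m_i}}(y)$. This exhibits $\one_S$ as a linear combination, with rational coefficients $\pi_i(a)/p$, of indicators of lines parallel to the special directions. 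The remaining constant term is handled by writing $1=\sum_a \one_{\ell_{a,\mathbf m_1}}$, which is again such a combination.

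Two degenerate cases need a separate look. If $d=0$, every direction is non-special and the formula collapses to $\one_S\equiv n/p$. That constant is $0$ or $1$, so $S=\emptyset$ or $S=\Fp^2$, and $\one_S$ is trivially a combination of lines (the empty combination, or all lines in one direction). If $d=1$, the formula directly gives a combination of lines in the single special direction.

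The only real obstacle is bookkeeping: the normalisation constants in the Fourier inversion must be tracked carefully so that the constant term comes out exactly $(d-1)n/p$. No analytic estimate is needed.
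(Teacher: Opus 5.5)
Your proof is correct, but it takes a different route from the paper. The paper does not reprove this cited statement. Instead it proves the explicit form, Proposition~\ref{projections:explicit}, by double counting. Fix $z$ and sum $|S\cap L|$ over the $p+1$ lines $L$ through $z$. Each point of $S\setminus\{z\}$ is counted once and $z$ is counted $p+1$ times, so the total is $np+p\one_S(z)$. The $p+1-d$ non-special lines through $z$ contribute exactly $(p+1-d)n$. This gives $\sum_{i=1}^d S_i(z)=(d-1)n+p\one_S(z)$, which is the identity you derive.

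You instead use $\widehat{\pi_{u,S}}(x)=\widehat{\one_S}(xu)$ to confine $\operatorname{supp}\widehat{\one_S}$ to $d$ lines through the origin meeting only at $0$, and then invert one slice at a time. The double-counting argument is shorter and entirely elementary. It has no normalisation constants to track, which is exactly the bookkeeping you flag as the only obstacle. Your Fourier argument explains why only special directions can appear and where the constant comes from: the zero frequency lies on every slice, so it is counted $d$ times instead of once. It also fits the uncertainty-principle viewpoint of the introduction.

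There are two small slips. First, with $u_i^\perp=\mathbf m_i$, the level sets of $\langle u_i,\cdot\rangle$ are the lines $L_{u_i,a}$ of the introduction. They are not the lines $\ell_{a,\mathbf m_i}$ of Section~\ref{sec:projection}, which are level sets of $\langle\mathbf m_i,\cdot\rangle$. The paper's two conventions differ, so either take $u_i=\mathbf m_i$ there or write $L_{u_i,a}$. Second, when $d=0$ and $S=\Fp^2$, the lines of a single direction are not parallel to any special direction, since there are none. In that case the statement holds only if constant functions are admitted. This is harmless, because the paper uses the result only when $d\ge3$.
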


To make this explicit, let
$\mathbf m_i=(m_{i,x},m_{i,y})$ for $i=1,\ldots,k$ denote the special
directions of $S$. Define the projection functions
\begin{equation}\label{eq:a_i}
 a_i(k):=\#\{(x,y)\in S:m_{i,x}x+m_{i,y}y=k\},\qquad k\in\Fp,
\end{equation}
and the auxiliary functions
\[
 S_i(x,y)=a_i(m_{i,x}x+m_{i,y}y),
\]
which are linear combinations of the characteristic functions of lines
parallel to $\mathbf m_i$. The coefficients $a_i(k)$ are nonnegative integers
between $0$ and $p$.

 From now on, by a slight abuse of notation, we also write $S$ for the characteristic function $\one_S$ of the set $S$.
In the following proposition, the functions $S$ and $S_i$ are treated as integer-valued functions, using the identification between $\mathbb{F}_p$ and the sets $\{0,1,\ldots, p-1\}$ again.	

\begin{proposition}\label{projections:explicit}
Let $S\subseteq\Fp^2$ be a set of cardinality $np$ having $k$ special
directions, and let $S_i$ be defined as above. Then
\begin{equation}\label{eq4.3}
 S(x,y)=-\frac{n(k-1)}{p}+\frac1p\sum_{i=1}^k S_i(x,y).
\end{equation}
\end{proposition}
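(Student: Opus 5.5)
We will prove the identity \eqref{eq4.3} by comparing Fourier transforms of both sides on $\Fp^2$, with all functions regarded as integer (hence complex) valued. Both sides are functions $\Fp^2\to\mathbb{C}$, so it suffices to show that their Fourier transforms, as defined in \eqref{eq:defoffourier}, agree at every $\xi\in\Fp^2$. Here $n$ is defined by $|S|=np$ and $\mathbf m_1,\dots,\mathbf m_k$ are the special directions.

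\emph{Step 1: the transform of $S_i$.} Since $S_i(x,y)=a_i(\langle\mathbf m_i,(x,y)\rangle)$ is constant on the lines $\langle \mathbf m_i,\cdot\rangle=t$, we compute
\[
\widehat{S_i}(\xi)=\sum_{t\in\Fp}a_i(t)\sum_{\langle\mathbf m_i,z\rangle=t}e_p(-\langle\xi,z\rangle).
\]
The inner sum vanishes unless $\xi$ is a multiple of $\mathbf m_i$. If $\xi=c\,\mathbf m_i$, it equals $p\,e_p(-ct)$. Hence
\[
\widehat{S_i}(c\mathbf m_i)=p\,\widehat{a_i}(c)=p\,\widehat{\one_S}(c\mathbf m_i),
\]
where the last equality is the identity $\widehat{\pi_{u,S}}(x)=\widehat{\one_S}(xu)$ from the introduction. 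Moreover, $\widehat{S_i}(\xi)=0$ for $\xi\notin\Fp\mathbf m_i$. In particular, $\widehat{S_i}(0)=p\sum_t a_i(t)=p\cdot np$.

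\emph{Step 2: compare at $\xi\neq 0$.} Take $\xi=c\mathbf m$ with $c\ne0$. If $\mathbf m$ is non-special, the introduction shows $\widehat{\one_S}(\xi)=0$. In that case every $\widehat{S_i}(\xi)$ is also $0$, since $\xi$ lies on no special line $\Fp\mathbf m_i$, and the constant term contributes nothing at $\xi\neq0$. If instead $\mathbf m=\mathbf m_j$, then exactly one summand survives, because distinct directions span lines meeting only at $0$. The right-hand side then gives $\frac1p\cdot p\,\widehat{\one_S}(\xi)=\widehat{\one_S}(\xi)$, as required.

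\emph{Step 3: compare at $\xi=0$.} The left-hand side gives $\widehat{\one_S}(0)=np$. The right-hand side gives
\[
-\frac{n(k-1)}{p}\cdot p^2+\frac1p\cdot k\cdot np^2=np\bigl(-(k-1)+k\bigr)=np,
\]
which matches. Fourier inversion then yields \eqref{eq4.3}.

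The only real subtlety is bookkeeping rather than a genuine obstacle. One must treat $a_i$ as integers, which is legitimate after the reduction that removes full lines, or even without it since the identity is over $\mathbb{C}$. One must also check that the special directions are indexed by their normal vectors $\mathbf m_i$, consistently with $u$ in the Fourier criterion, so that "non-special" correctly forces $\widehat{\one_S}$ to vanish on $\Fp^\times\mathbf m$.
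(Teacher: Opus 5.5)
Your proof is correct, but it takes a different route from the paper. You work on the frequency side, while the paper proves \eqref{eq4.3} by a pointwise double count. The paper fixes $z$ and sums $|S\cap\ell|$ over the $p+1$ lines $\ell$ through $z$. Every point of $S\setminus\{z\}$ is counted once and $z$ (if it lies in $S$) is counted $p+1$ times, which gives $np+p\,S(z)$. It then removes the $p+1-k$ non-special directions, each of which contributes exactly $n$.

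The two arguments are dual to each other. Your Step~1, summed over all $p+1$ directions $\mathbf m\in\PP^1(\Fp)$, is precisely the Fourier transform of the paper's intermediate identity $\sum_{\mathbf m}S_{\mathbf m}(z)=np+p\,S(z)$.

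The double count is shorter and purely combinatorial. It also avoids Fourier inversion and the normal-versus-direction-vector bookkeeping you had to flag.

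Your version explains the shape of the formula. Step~1 shows that $\frac1p S_i$ is exactly the part of $\one_S$ whose frequencies lie on the line $\Fp\mathbf m_i$, i.e.\ $\frac1p S_i=\frac1{p^2}\sum_{c\in\Fp}\widehat{\one_S}(c\mathbf m_i)\,e_p(\langle c\mathbf m_i,\cdot\rangle)$. Since $\widehat{\one_S}$ is supported on $\bigcup_i\Fp\mathbf m_i$, and these lines meet only at $0$, \eqref{eq4.3} is the resulting inclusion--exclusion. The term $(k-1)n/p$ corrects the $k$-fold count of the zero-frequency component $n/p$. In this way your argument makes Proposition~\ref{projections:combination} explicit and ties the formula directly to the Fourier-support description of special directions in the introduction.
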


\begin{proof}

Fix $z=(x,y)\in\Fp^2$. Consider the $p+1$ affine lines through $z$,
one in each direction. Summing the cardinalities of their intersections
with $S$, every point of $S\setminus\{z\}$ is counted exactly once, while
$z$, if it belongs to $S$, is counted $p+1$ times. Hence the sum over all
directions is
$$
np+pS(z).
$$
There are $p+1-k$ non-special directions, and the line through $z$ in each
such direction contains exactly $n$ points of $S$. Therefore their total
contribution is $(p+1-k)n$. Consequently,
$$
\sum_{i=1}^k S_i(z)
=np+pS(z)-(p+1-k)n
=(k-1)n+pS(z),
$$
which is equivalent to equation \eqref{eq4.3}.
\end{proof}

\subsection{Affine independence of projection functions}

It was proved in \cite{kisssomlai} that a subset of $\Fp^2$ having exactly
three special directions can be obtained, up to an invertible affine
transformation, from the so--called triangle or from its complement. The
triangle can be described as
\[
 \mathcal T_p=\{(x,y)\in\Fp^2:y\in\{0,1,\ldots,x-1\}\}.
\]
The proof in \cite{kisssomlai} proceeds in two steps. First,
Proposition~\ref{projections:degree} implies that when a set has exactly three
special directions, the corresponding projection polynomials are linear. In
the second step, one uses the following lemma, which although not stated
explicitly in \cite{kisssomlai}, is contained in its arguments.
For the sake of completeness, we present a short proof of it.

\begin{lemma}\label{lem:2linearpolynomial}
Suppose that no complete affine line is contained in $S$ and two special directions of $S\subset\Fp^2$ have linear
projection polynomials. Then $S$ is an affine
image of $\mathcal T_p$. In particular $S$ has exactly three special directions.
\end{lemma}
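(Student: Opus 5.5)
The plan is to normalize coordinates, use the linearity of the two projection polynomials to pin down the row and column counts exactly, and then reconstruct $S$ by a rigidity argument for $0$–$1$ matrices with prescribed margins.

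\emph{Normalization and counting.} Apply an invertible linear map sending the two given special directions to the vertical and the horizontal direction. This preserves everything relevant: special directions, the absence of complete lines, and the degrees of projection polynomials. After this step the column counts are $c(x)=\#\{y:(x,y)\in S\}$ and the row counts are $r(y)=\#\{x:(x,y)\in S\}$. Since no complete line lies in $S$, both take values in $\{0,\dots,p-1\}$. The hypothesis says that as functions $\Fp\to\Fp$ we have $c(x)=\alpha x+\beta$ and $r(y)=\gamma y+\delta$. Both directions are special, so neither function is constant, and hence $\alpha,\gamma\ne0$. Therefore $c$ and $r$ are bijections $\Fp\to\{0,1,\dots,p-1\}$: every integer value $0,\dots,p-1$ occurs exactly once as a column count and exactly once as a row count. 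In particular $|S|=p(p-1)/2$.

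\emph{Combinatorial rigidity.} Regard $S$ as a $p\times p$ $0$–$1$ matrix whose row sums and column sums are each a permutation of $0,1,\dots,p-1$. I will show by induction on $p$ (here just on matrix size $m$) that such a matrix is a staircase. Precisely, $(x,y)\in S$ if and only if $r(y)+c(x)\ge m$.
\begin{itemize}
\item The row with sum $0$ is empty.
\item The column with sum $m-1$ misses exactly one row. Since the empty row contributes nothing to it, that column is exactly the complement of the empty row.
\item Delete that column and the empty row. Every remaining row loses exactly one point, so the remaining row sums are $0,\dots,m-2$. The remaining column sums are also $0,\dots,m-2$.
\item The induction hypothesis now gives the staircase description on the smaller matrix.
\item Checking the criterion $r(y)+c(x)\ge m$ on the deleted column and the deleted row is immediate.
\end{itemize}
This reconstruction step is the one that needs care, in particular keeping track of how the counts shift after each deletion. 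The argument is elementary, though: it is essentially the uniqueness case of Gale–Ryser.

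\emph{Identifying the triangle.} Combining the two steps, $S=\{(x,y): (\alpha x+\beta)+(\gamma y+\delta)\ge p\}$, where the residues are read as integers in $\{0,\dots,p-1\}$. Put $u=\alpha x+\beta$ and $v=p-1-(\gamma y+\delta)=-\gamma y-\delta-1$ in $\Fp$. The condition becomes $v\le u-1$, that is, $v\in\{0,1,\dots,u-1\}$. The map $(x,y)\mapsto(u,v)$ is an invertible affine map, and it sends $S$ onto $\mathcal T_p$. Hence $S$ is an affine image of $\mathcal T_p$.

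The final claim is that $\mathcal T_p$ has exactly three special directions. Once the previous steps are in place this is a routine verification. The vertical and horizontal directions are special, as shown above. The diagonal direction is special as well: along the lines $x-y=t$ the counts are $p-t$ for $t\ne0$ and $0$ for $t=0$. For every other slope, a direct count, or equivalently the vanishing of the corresponding values of $\widehat{\one_{\mathcal T_p}}$ (a geometric-series computation), shows equidistribution with $(p-1)/2$ points per line. Number of special directions is preserved by affine maps, so $S$ also has exactly three special directions.
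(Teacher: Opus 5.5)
Your proof is correct and follows essentially the same route as the paper. You normalize the two directions to vertical and horizontal, use linearity plus the absence of full lines to make the row and column counts permutations of $\{0,1,\ldots,p-1\}$, and then repeatedly peel off the empty line together with the line containing all but one point. The differences are cosmetic: you record the outcome as the criterion $r(y)+c(x)\ge m$ and then write the affine map explicitly (the paper normalizes to $c(x)=x$, $r(y)=p-1-y$ first), and you verify via the geometric-series computation that $\mathcal T_p$ has exactly three special directions rather than citing Kiss and Somlai.
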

\begin{proof}
After an affine change of coordinates, the two directions may be assumed to be the vertical and horizontal ones. Since both projection polynomials are nonconstant and linear, they are permutation polynomials of $\Fp$. As $S$ does not contain a complete line, the integer values of these polynomials form the set $\{0,1,\ldots , p-1\}$. By affine changes of the two coordinates separately, we may arrange that the vertical line $x=j$ contains exactly $j$ points and the horizontal line $y=j$ contains $p-1-j$ points for $j=0,1,\ldots ,p-1$.
The column $x=0$ is empty, while the row $y=0$ contains $p-1$ points, i.e., this row contains every point $(x,0)$ with $x\ne 0$. Deleting this row and the empty column, the remaining column sums are $0,1,\ldots , p-2$ and the remaining row sums are $p-2,p-3, \ldots , 0$. Repeating the same argument shows successively that $$(x,y)\in S\Leftrightarrow y<x,$$
after the chosen affine change of coordinates. Thus $S$ is an affine image of $\mathcal T_p$, which has exactly three special directions by \cite{kisssomlai}.
\end{proof}

For larger numbers of special directions the situation is subtler. When
$k\ge4$, the projection polynomials need not all have degree exactly $k-2$. In
particular, for $k=4$ one may encounter both linear and quadratic projection
polynomials. Note that, however, in this case, by Lemma \ref{lem:2linearpolynomial} $S$ has at most one linear projection polynomial. In what follows we shall focus on this case. The key observation
is that auxiliary projection polynomials corresponding to distinct special directions
are affinely independent when taken three at a time.

\begin{lemma}\label{lem:3independent}
Let $p_1,p_2,p_3\in\Fp[z]$ be nonconstant polynomials of degrees
$1\le d_i=\deg(p_i)\le p-1$, and assume that at least two of them have degree
$\ge2$. Let $a_ix+b_iy$ $(i=1,2,3)$ be nonzero linear forms such that
the vectors $(a_i,b_i)$ are pairwise nonproportional, i.e. they represent
distinct elements of the projective line $\PP^1(\Fp)$. Then the polynomials
$$
 q_i(x,y)=p_i(a_ix+b_iy)\in \Fp[x,y], \qquad i=1,2,3,
$$
are affinely independent over $\Fp$.
\end{lemma}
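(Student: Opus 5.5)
The plan is to prove a slightly stronger statement: there is no relation
\[
 c_1q_1+c_2q_2+c_3q_3=c_0,\qquad (c_1,c_2,c_3)\ne(0,0,0),\ c_0\in\Fp ,
\]
as polynomials in $\Fp[x,y]$. A relation $\sum c_iq_i=0$ with $\sum c_i=0$, and any relation of the form $q_1-q_3=\mu(q_2-q_3)$, are special cases of this. So this stronger statement implies affine independence under either usual meaning of the term.

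\emph{Function versus polynomial identities.} Since $\deg q_i=d_i\le p-1$, every monomial of $q_i$ has degree at most $p-1$ in each variable. Polynomials with this property are determined by the functions they induce on $\Fp^2$. Hence it does not matter whether the relation above is read as an identity of functions or of polynomials, and I will work with polynomials throughout.

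\emph{Key step: any index with a nonzero coefficient has degree $1$.} Fix $j$ with $c_j\ne0$ and suppose $d_j\ge2$. Since the three linear forms are pairwise nonproportional, an invertible linear change of variables brings the other two forms to $x$ and $y$. The $j$-th form then becomes $\alpha x+\beta y$ with $\alpha\beta\ne0$; after rescaling $x$ and $y$ (which keeps the other two polynomials in one variable each) it becomes $x+y$. Such a change of variables preserves both the degrees and the existence of a relation. The relation now reads
\[
 c_j\,p_j(x+y)+c'\,P(x)+c''\,R(y)=c_0,
\]
where $P$ and $R$ are the other two polynomials $p_i$. Compare the coefficients of the monomial $x^{d_j-1}y$. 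This monomial is mixed because $d_j\ge2$, so it does not occur in $P(x)$ or $R(y)$. Its coefficient in $c_j\,p_j(x+y)$ is $c_j\,\mathrm{lc}(p_j)\,d_j$, which is nonzero because $1\le d_j\le p-1$ gives $p\nmid d_j$. This contradicts the relation, so $d_j=1$ whenever $c_j\ne0$.

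\emph{Conclusion.} At least two of the $p_i$ have degree at least $2$, so by the key step at most one coefficient, say $c_1$, can be nonzero, and then $d_1=1$. The relation reduces to $c_1p_1(a_1x+b_1y)=c_0$. Its left-hand side is a nonconstant linear polynomial, because $(a_1,b_1)\ne0$, $p_1$ is nonconstant and $c_1\ne0$. This is a contradiction, so $c_1=c_2=c_3=0$.

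\emph{Main obstacle.} The only delicate point is the nonvanishing of the mixed coefficient, $\binom{d_j}{1}=d_j\not\equiv0 \pmod p$. This is exactly where the hypothesis $d_i\le p-1$ is needed: for degree $p$, the map $z\mapsto z^p$ is additive and the argument would fail.
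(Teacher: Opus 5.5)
Your proof is correct, but it takes a different route from the paper. The paper sets $m=\max\{d_i:\lambda_i\ne0\}$ and passes to the degree-$m$ homogeneous component of the relation. It then shows that the leading forms $c_{d_i}(a_ix+b_iy)^m$, over the indices with $d_i=m$, are linearly independent. After scaling away the binomial coefficients (all nonzero since $m\le p-1$), their coefficient vectors are rows of a Vandermonde matrix in the distinct ratios $b_i/a_i$.

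You instead use the fact that there are only three forms. This lets you turn the two forms other than the $j$-th into the coordinate functions, so their contributions become univariate and contain no mixed monomial. The coefficient of $x^{d_j-1}y$, namely $c_j\,\mathrm{lc}(p_j)\,d_j\ne0$, then forces $c_j=0$.

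Your route is shorter and gives a slightly sharper conclusion. Every $q_j$ with $2\le d_j\le p-1$ lies outside the affine span of the other two and the constants, whatever their degrees. So you never need to track which index attains the maximal degree or which degrees coincide, and you use only $p\nmid d_j$.

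The paper's leading-form argument, in turn, is not tied to having exactly three forms. The same Vandermonde reasoning shows that up to $m+1$ polynomials $p_i(a_ix+b_iy)$ of a common degree $m<p$, in distinct directions, have linearly independent leading parts. Your coordinate trick does not extend directly once three or more other forms would have to be made univariate at the same time.
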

\begin{proof}
Assume, toward a contradiction, that $q_1,q_2,q_3$ are affinely dependent.
That is, there exist $\lambda_1,\lambda_2,\lambda_3,\mu\in\Fp$, with
$\lambda_1,\lambda_2,\lambda_3$ not all zero, such that
$$\lambda_1q_1(x,y)+\lambda_2q_2(x,y)+\lambda_3q_3(x,y)=\mu
 \qquad\text{for all $(x,y)\in\Fp^2$.}
$$
As the total degree of each $q_i$ is at most $p-1$, this functional
identity is also a polynomial identity in $\Fp[x,y]$.
Since the constant terms can be absorbed into $\mu$, we may focus on the
homogeneous components of positive degree.

It is clear that if, in a hypothetical affine relation, the homogeneous
components of maximal degree are linearly independent, then the affine relation is
impossible. Thus, if all three polynomials have different positive degrees,
they are affinely independent. If two of them have the same degree while the
third has a different one, then it is enough to prove that the leading homogeneous
parts of the two having the same degree are linearly independent. The following
argument, with a suitable modification of the indices, works in this case as well.

More generally, in a hypothetical affine relation let
$ m=\max\{d_i:\lambda_i\ne0\}.$
It is enough to consider the homogeneous component of degree $m$. If $m=1$,
then by the assumption of the lemma there is at most one polynomial of degree
one. Hence only one $\lambda_i$ can be nonzero, which is impossible since the
corresponding $q_i$ is nonconstant. Thus $m\ge2$.

Let
$p_i(z)=c_{d_i}z^{d_i}+\cdots$ be a one-variable polynomial with leading coefficient
$c_{d_i}\ne0$. Then, for every index $i$ with $d_i=m$, the highest-degree
homogeneous part of $q_i(x,y)$ is $c_{d_i}(a_ix+b_iy)^m$.
Expanding this and treating the components as coordinates, we may write this
vector as

$$
 v_i=c_{d_i}\left(
 \binom{m}{0}a_i^m,
 \binom{m}{1}a_i^{m-1}b_i,
 \ldots,
 \binom{m}{m}b_i^m
 \right).
$$
Thus the coefficient of $x^{m-u}y^u$ is equal to $ c_{d_i}\binom{m}{u}a_i^{m-u}b_i^u$.

Taking these homogeneous components of degree $m$ in the assumed affine relation,
we obtain a linear dependence among the vectors $v_i$ corresponding to the indices for which $d_i=m$.

After a common invertible linear change of variables, we may assume that
$a_i\ne0$ for all these indices. Since $m\le p-1$, all the binomial coefficients
$\binom{m}{u}$ are nonzero in $\Fp$. Hence multiplying columns of this coefficient
matrix by nonzero scalars does not affect its rank, so the binomial coefficients
can be removed. Dividing the $i$-th row by the nonzero scalar $c_{d_i}a_i^m$,
the remaining row vectors correspond to geometric progressions with pairwise
distinct ratios $b_i/a_i$. Such geometric sequences correspond to the rows of
a Vandermonde matrix and are therefore linearly independent.

Consequently, the homogeneous component of degree $m$ in the assumed affine
relation forces
$$
 \lambda_i=0\qquad\text{for every $i$ with $d_i=m$}.
$$
This contradicts the definition of $m$. Therefore $q_1,q_2,q_3$ are affinely
independent.
\end{proof}
For $k=4$, Proposition~\ref{projections:degree} shows that every projection
polynomial has degree at most $2$. Moreover, by
Lemma~\ref{lem:2linearpolynomial}, at most one of the four special projection
polynomials can be linear. Consequently, any three of the corresponding
auxiliary projection functions contain at least two quadratic functions and
therefore satisfy the assumptions of Lemma~\ref{lem:3independent}. Hence any
three of the four auxiliary projection functions are affinely independent.

\section{The case of four special directions}
\label{sec:k4}
In this section, we prove Theorem~\ref{thm:4directions}.


 Let $S\subset \Fp^2$ have exactly four special directions. By the reduction in Section 3, we can assume that $S$ contains no complete affine line. Thus every projection value belongs to $\{0,1,\ldots , p-1\}$. By Lemma \ref{lem:2linearpolynomial}, at most one of the four special projection polynomials is linear. Therefore, any three projection polynomials satisfy the assumptions of Lemma~\ref{lem:3independent}, hence any three projection polynomials are affinely independent, i.e., every nonzero linear combination of $S_1, S_2, S_3$, regarded as a polynomial of $\Fp^2$, is nonconstant and has degree at most 2.





The following proposition is the key step in proving
Theorem~\ref{thm:4directions}. Its essence is that the values
$(S_1(x,y),S_2(x,y),S_3(x,y))$, viewed as points of $\Fp^3$, are uniformly distributed
modulo $p$. 

\begin{proposition}\label{prp:4directions-equid}
Let $S\subset\Fp^2$ have exactly four special directions, and let $S_1, S_2, S_3$ be as
above. Suppose in addition that $S$ contains no complete affine line. Then for all sufficiently large primes $p$ there exist points
$z_0,z_1\in\Fp^2$ such that
\[
 S_i(z_0)\le\frac{p}{10}
 \qquad\text{and}\qquad
 S_i(z_1)\ge\frac{9p}{10},
 \qquad i=1,2,3.
\]
\end{proposition}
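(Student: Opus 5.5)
We will apply Theorem~\ref{thm:explicit diszkrepancia masodfokuakra} to the map $G=(S_1,S_2,S_3)\colon\Fp^2\to\Fp^3$, where each $S_i$ is regarded as a polynomial function with values in $\Fp$. Because $S$ contains no complete line, each value $S_i(z)$ lies in $\{0,1,\ldots,p-1\}$. Hence the integer $S_i(z)$ agrees with the representative of the corresponding residue in $\{0,\ldots,p-1\}$, so the inequalities in the statement are simply conditions on residues lying in intervals.

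First we verify the hypothesis of Theorem~\ref{thm:explicit diszkrepancia masodfokuakra}. By Proposition~\ref{projections:degree} with $k=4$, every projection polynomial has degree at most $2$. By Lemma~\ref{lem:2linearpolynomial}, at most one of them is linear, and none is constant since the directions are special. So among $S_1,S_2,S_3$ at least two have degree exactly $2$, and the directions are pairwise distinct. Lemma~\ref{lem:3independent} then shows that $S_1,S_2,S_3$ are affinely independent. Hence every nonzero linear combination $\xi_1S_1+\xi_2S_2+\xi_3S_3$ is a nonconstant polynomial of degree at most $2$, as the discussion preceding the proposition already observes.

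Next we choose two boxes. Let $I_0=\{0,1,\ldots,\lfloor p/10\rfloor\}$ and $I_1=\{\lceil 9p/10\rceil,\ldots,p-1\}$; both are intervals of length at least $p/10-1$. Set $T_0=I_0^3$ and $T_1=I_1^3$. By Theorem~\ref{thm:explicit diszkrepancia masodfokuakra},
\[
\#G^{-1}(T_j)\ \ge\ p^2\left(\frac{|T_j|}{p^3}-\frac{(2+\log p)^3}{\sqrt p}\right)\ \ge\ p^2\left(\Bigl(\frac1{10}-\frac1p\Bigr)^3-\frac{(2+\log p)^3}{\sqrt p}\right).
\]
The right-hand side is positive once $p$ is large enough that $(2+\log p)^3/\sqrt p<(1/10-1/p)^3\approx 10^{-3}$. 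For such $p$ both preimages are nonempty, and we pick $z_0\in G^{-1}(T_0)$ and $z_1\in G^{-1}(T_1)$.

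The only delicate point is the identification of integer values with residues. It uses the no-complete-line assumption, which keeps every value at most $p-1$, together with the fact that the polynomial representing $S_i$ mod $p$ is exactly $p_i(a_ix+b_iy)$ of degree at most $2$. The rest reduces to the explicit discrepancy bound, so the threshold on $p$ is effective, though very large.
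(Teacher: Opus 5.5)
Your proposal is correct and matches the paper's proof. The paper also gets affine independence of $S_1,S_2,S_3$ from Lemma~\ref{lem:3independent} together with Lemma~\ref{lem:2linearpolynomial}. It then bounds the exponential sums by Lemma~\ref{lem:2.1C}, applies Proposition~\ref{prop: diszkrepancia szamitas} (equivalently Theorem~\ref{thm:explicit diszkrepancia masodfokuakra}), and takes the cubes over $\{0,\ldots,\lfloor p/10\rfloor\}$ and $\{\lceil 9p/10\rceil,\ldots,p-1\}$, whose preimages are nonempty for large $p$.
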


\begin{proof}
Put $G=(S_1,S_2,S_3)$ and regard
\[
 M=\{G(x,y):(x,y)\in\Fp^2\}
\]
as a multiset. For
$0\ne\xi=(\xi_1,\xi_2,\xi_3)\in\Fp^3$, set
\[
 A(\xi)=\sum_{(x,y)\in\Fp^2}
 e_p\!\left(\sum_{j=1}^3\xi_jS_j(x,y)\right).
\]
By Lemma~\ref{lem:3independent}, $\sum_j\xi_jS_j$ is nonconstant and has
degree at most $2$. Lemma~\ref{lem:2.1C} therefore gives $|A(\xi)|\le p^{3/2}$.
Applying Proposition~\ref{prop: diszkrepancia szamitas} with $N=p^2$, $r=3$, and $B=p^{3/2}$, we obtain,
for every interval box $T=I_1\times I_2\times I_3$,
\[
 \left|\frac{\#G^{-1}(T)}{p^2}-\frac{|T|}{p^3}\right|
 \le\frac{L_p^3}{\sqrt p},
 \qquad L_p=1+H_{(p-1)/2}.
\]
Now let
\[
 I^- =\{0,\ldots,\lfloor p/10\rfloor\},\qquad
 I^+ =\{\lceil9p/10\rceil,\ldots,p-1\},
 \qquad B^\pm=(I^\pm)^3.
\]
Since $|I^-|\ge p/10$ and $|I^+|=\lfloor p/10\rfloor\ge(p-9)/10$,
\begin{align*}
 \#G^{-1}(B^-)&\ge \frac{p^2}{1000}-p^{3/2}L_p^3,\\
 \#G^{-1}(B^+)&\ge
 p^2\left(\frac{p-9}{10p}\right)^3-p^{3/2}L_p^3.
\end{align*}
Because $L_p=O(\log p)$, both right-hand sides are positive for all
sufficiently large $p$. Hence $M$ meets both $B^-$ and $B^+$, which is exactly
the asserted pair of inequalities.
\end{proof}

\begin{proof}[Proof of Theorem~\ref{thm:4directions}]
Suppose that $p$ is sufficiently large and $S\subseteq \Fp^2$ has exactly four special directions. As above, remove complete lines if necessary and relabel the special directions so that Proposition~\ref{prp:4directions-equid} can be applied. Let $|S|=np$. By Proposition \ref{projections:explicit},
\[
 S(x,y)= -\frac{3n}{p}
 +\frac1p\sum_{i=1}^4\bigl(S_i(x,y)\bigr).
\]
Let $z_0,z_1\in \Fp^2$ be given by Proposition \ref{prp:4directions-equid}.
Then, for $i=1,2,3$,
$$S_i(z_1)-S_i(z_0)\ge \lceil9p/10\rceil-\lfloor p/10\rfloor \ge \frac{4p}{5}.$$
Moreover, $0\le S_4(z) \le p-1$ for every $z \in \Fp^2$, and hence $$S_4(z_1)-S_4(z_0)\ge -(p-1).$$

Subtracting the explicit formula at the two chosen points therefore gives
\begin{equation*}
\begin{split}&S(z_1)-S(z_0)=\frac1p\sum_{i=1}^4\bigl(S_i(z_1)-S_i(z_0)\bigr)\\
&\ge \frac 1p \bigl( 3\cdot \frac{4p}{5}-(p-1)\bigr)= \frac{7}{5}+\frac 1p >1.
\end{split}
\end{equation*}
This is impossible, since all the values of a characteristic function are 0 and 1 and hence the left-hand side belongs to $\{-1,0,1\}$. The contradiction proves the theorem.
\end{proof}
\section{A conditional extension to more special directions}
\label{sec:affineextension}
The proof of Theorem~\ref{thm:4directions} uses three properties of the chosen
projection functions:
\begin{itemize}
    \item they are affinely independent,
    \item  their degrees are
bounded,
\end{itemize}
  In the four-direction case the relevant
polynomials have degree at most $2$, so the explicit estimate of
Lemma~\ref{lem:2.1C} is sufficient. For projection functions of arbitrary bounded
degree, Corollary~\ref{cor:weyl} yields the same qualitative
equidistribution statement. We now prove the conditional extension stated in Theorem~\ref{thm:affineextension-intro}.

First observe that all $k$ special projection functions can never be affinely
independent. Indeed, reducing the identity in
Proposition~\ref{projections:explicit} modulo $p$ gives
\[
 \sum_{i=1}^k S_i=(k-1)n.
\]
Thus there is always an affine relation among the full family.

\begin{theorem}\label{thm:affineextension}
Fix integers $k\ge4$ and $r$ such that
\[
 \frac{k+1}{2}<r\le k-1.
\]
Then there exists $T_{k,r}$ such that, for every prime $p\ge T_{k,r}$,
there is no subset $S\subseteq\Fp^2$ of cardinality divisible by $p$
with exactly $k$ special directions for which $r$ of the corresponding
auxiliary projection functions $S_i$, regarded as polynomials on
$\Fp^2$, are affinely independent.
\end{theorem}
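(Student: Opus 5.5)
Following the four-direction argument, I will equidistribute the $r$ affinely independent projection functions and then derive a contradiction from the explicit formula of Proposition~\ref{projections:explicit}. Suppose $S$ has exactly $k$ special directions and $|S|=np$. First remove complete lines as in Section~\ref{sec:projection}. This changes each $S_i$ only by a constant, so affine independence is preserved, and afterwards every $S_i$ takes values in $\{0,\ldots,p-1\}$. Relabel so that $S_1,\ldots,S_r$ are affinely independent. By Proposition~\ref{projections:degree} each $S_i$ is a polynomial on $\Fp^2$ of degree at most $k-2$. Hence for every $0\ne\xi\in\Fp^r$ the combination $\sum_j\xi_jS_j$ is nonconstant, by affine independence, and has degree at most $k-2$. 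Corollary~\ref{cor:weyl} with two variables and $D=k-2$ then gives
\[
\Bigl|\sum_{z\in\Fp^2}e_p\Bigl(\sum_{j=1}^r\xi_jS_j(z)\Bigr)\Bigr|\le C_{2,k-2}\,p^{3/2}
\]
for $p>k-2$.

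Next apply Proposition~\ref{prop: diszkrepancia szamitas} with $X=\Fp^2$, $N=p^2$, $G=(S_1,\ldots,S_r)$ and $B=C_{2,k-2}p^{3/2}$. For every interval box $T\subseteq\Fp^r$ this gives $\#G^{-1}(T)\ge p^{2-r}|T|-C_{2,k-2}p^{3/2}L_p^r$. Fix a small $\varepsilon>0$ depending on $k,r$, and take the boxes $B^-=\{0,\ldots,\lfloor\varepsilon p\rfloor\}^r$ and $B^+=\{\lceil(1-\varepsilon)p\rceil,\ldots,p-1\}^r$. Each has size about $(\varepsilon p)^r$, so the main term is about $\varepsilon^rp^2$. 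Since $L_p=O(\log p)$, this beats the error term $O(p^{3/2}(\log p)^r)$ once $p\ge T_{k,r}$. We therefore obtain points $z_0,z_1$ with $S_i(z_0)\le\varepsilon p$ and $S_i(z_1)\ge(1-\varepsilon)p$ for $i\le r$.

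Finally, subtract formula \eqref{eq4.3} at $z_1$ and $z_0$. The $r$ controlled terms each contribute at least $(1-2\varepsilon)p$. Each of the remaining $k-r$ terms satisfies $S_i(z_1)-S_i(z_0)\ge-(p-1)$. Hence
\[
S(z_1)-S(z_0)\ge r(1-2\varepsilon)-(k-r)=2r-k-2r\varepsilon.
\]
The hypothesis $r>(k+1)/2$ means $2r-k>1$, so $2r-k\ge2$ because it is an integer. Choosing $\varepsilon<1/(2r)$ makes the right-hand side strictly greater than $1$. This is impossible, since $S(z_1)-S(z_0)\in\{-1,0,1\}$. Carrying out the subtraction requires working with the integer-valued version of \eqref{eq4.3}, exactly as in the proof of Theorem~\ref{thm:4directions}.

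I expect the main point needing care to be the applicability of Corollary~\ref{cor:weyl}. One must check that the degree bound $k-2<p$ holds, and that the combination $\sum_j\xi_jS_j$, viewed as a reduced polynomial of degree at most $k-2$, is genuinely nonconstant as a polynomial. Affine independence of the $S_i$ as functions is equivalent to this, since all degrees are below $p$ and reduced polynomials represent functions uniquely. Everything else carries over verbatim from Section~\ref{sec:k4}. The only new dependence is that the constants $C_{2,k-2}$ and $c$ are ineffective, which is why $T_{k,r}$ is not explicit.
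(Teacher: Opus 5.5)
Your proposal is correct and follows the paper's proof essentially step for step. The shared steps are removing complete lines, the degree bound $k-2$, Corollary~\ref{cor:weyl} for the $p^{3/2}$ exponential-sum bound, Proposition~\ref{prop: diszkrepancia szamitas} applied to the two extreme boxes, and subtracting the reconstruction identity at $z_0,z_1$. The only cosmetic difference is that you take $\varepsilon<1/(2r)$ using the integrality $2r-k\ge 2$, whereas the paper directly requires $r(1-2\varepsilon)-(k-r)>1$. Your closing remark about ineffective constants is unnecessary and unsupported (and the stray $c$ is undefined), since in two variables an $O_k(p^{3/2})$ bound already follows from Theorem~\ref{thm:weil} by summing over one variable.
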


\begin{proof}
Suppose that such a set $S$ exists. As in Section~\ref{sec:projection}, we
first remove complete affine lines and then relabel $|S|/p$ as $n$. This
reduction does not affect the affine-independence hypothesis. Indeed, in the
direction of a removed line the projection function is unchanged modulo
$p$, while in every other direction it is changed only by an additive
constant. Adding constants to the individual projection functions does not
affect their affine independence.

Relabel the special directions so that $S_1,\ldots,S_r$ are affinely
independent, and put
\[
 G=(S_1,\ldots,S_r):\Fp^2\longrightarrow\Fp^r.
\]
By Proposition~\ref{projections:degree}, every $S_i$, regarded as a
polynomial on $\Fp^2$, has degree at most $k-2$. Hence, for every
$0\ne\xi=(\xi_1,\ldots,\xi_r)\in\Fp^r$, affine independence implies that
\[
 Q_\xi=\sum_{i=1}^r\xi_iS_i
\]
is a nonconstant polynomial of degree at most $k-2$. For $p$ sufficiently
large, Corollary~\ref{cor:weyl} gives
\[
 \left|\sum_{z\in\Fp^2}e_p(Q_\xi(z))\right|
 \le C_{2,k-2}p^{3/2}.
\]
Applying Proposition~\ref{prop: diszkrepancia szamitas}, now with target dimension $r$, gives, for every
interval box $T\subseteq\Fp^r$,
\[
 \left|\frac{\#G^{-1}(T)}{p^2}-\frac{|T|}{p^r}\right|
 \le C_{2,k-2}\frac{L_p^r}{\sqrt p}=o(1),
\]
since $k$ and $r$ are fixed.

Choose $\varepsilon>0$ so small that
\[
 r(1-2\varepsilon)-(k-r)>1.
\]
Such an $\varepsilon$ exists because $2r-k>1$. Let
\[
 I^-_\varepsilon=\{0,\ldots,\lfloor\varepsilon p\rfloor\},
 \qquad
 I^+_\varepsilon=
 \{\lceil(1-\varepsilon)p\rceil,\ldots,p-1\}.
\]
Since the normalized sizes of both intervals tend to $\varepsilon$, the
preceding discrepancy estimate shows that, for all sufficiently large $p$,
there are $z_0,z_1\in\Fp^2$ such that
\[
 S_i(z_0)\le\varepsilon p,
 \qquad
 S_i(z_1)\ge(1-\varepsilon)p,
 \qquad i=1,\ldots,r.
\]
Consequently,
\[
 S_i(z_1)-S_i(z_0)\ge(1-2\varepsilon)p,
 \qquad i=1,\ldots,r.
\]
For each of the remaining $k-r$ projection functions we only use
$0\le S_i\le p-1$, and hence
\[
 S_i(z_1)-S_i(z_0)\ge-(p-1).
\]

Subtracting the identity of Proposition~\ref{projections:explicit} at
$z_0$ and $z_1$, we obtain
\begin{equation*}
 S(z_1)-S(z_0)
 \ge r(1-2\varepsilon)
      -(k-r)\left(1-\frac1p\right) =r(1-2\varepsilon)-(k-r)+\frac{k-r}{p} >1
\end{equation*}
for all sufficiently large $p$. This is impossible because
$S(z_1)-S(z_0)\in\{-1,0,1\}$.
\end{proof}

\begin{remark}
In particular, for every fixed $k\ge4$, the theorem applies whenever $k-1$
of the $k$ special projection functions are affinely independent. We do not
claim that such affine independence must hold in general. 
This result is
a conditional extension of the four-direction argument.


As a corollary, if $k=5$, then Lemma~\ref{lem:3independent} shows that
any three auxiliary projection functions are affinely independent. On the
other hand, in any hypothetical example for sufficiently large $p$, every
four of them must be affinely dependent by Theorem~\ref{thm:affineextension}. More precisely, one can verify that the only case not treated with our tools is when all 5 nonconstant projection polynomials are of degree 2.
\end{remark}
\section{A second-moment argument for few special directions}
\label{sec:second-moment}

The projection polynomial method and the reconstruction formula of
Section~\ref{sec:projection}, together with the estimates of
Section~\ref{sec:equidistribution}, also lead to a second-moment argument.
The additional observation is that projections of a uniformly chosen point
of the plane onto two distinct directions are independent. This allows us to
compute the variance of the sum of the special projection functions in two
ways, leading to a contradiction.

Throughout this section, $k\ge4$ is fixed, except in the final subsection,
where we briefly discuss the critical case $k=3$.
We retain the notation introduced in Section~\ref{sec:equidistribution}:
\[
 L_p=1+H_{(p-1)/2}=1+\sum_{r=1}^{(p-1)/2}\frac1r\le 2+\log p.
\]
\begin{theorem}\label{thm:quantitative}
Fix an integer $k\ge4$. If $p$ is a prime satisfying
\[
 p\ge \bigl(64k\log(8k)\bigr)^2.
\]
Then no subset of $\Fp^2$ has exactly $k$ special directions.
\end{theorem}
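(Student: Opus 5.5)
The idea is to compute the variance of $\sum_i S_i(z)$, for a uniformly random point $z\in\Fp^2$, in two ways: exactly from the reconstruction identity, and from below using equidistribution of each low-degree projection polynomial.

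\emph{Setup.} Let $S$ have exactly $k\ge4$ special directions. Remove complete lines as in Section~\ref{sec:projection}, so that $|S|=np$ with $0<n<p$ and every $a_i$ takes values in $\{0,\dots,p-1\}$. By Proposition~\ref{projections:explicit}, as integers,
\[
 \sum_{i=1}^k S_i(z)=(k-1)n+pS(z)\qquad\text{for all } z\in\Fp^2.
\]

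\emph{Upper bound for the variance.} Let $z$ be uniform on $\Fp^2$. Then $S(z)$ is Bernoulli with parameter $n/p$, so the right-hand side has variance
\[
 p^2\cdot\frac np\Bigl(1-\frac np\Bigr)=n(p-n)\le \frac{p^2}{4}.
\]

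\emph{Exact decomposition of the left-hand side.} For distinct directions $\mathbf m_i\ne\mathbf m_j$, the map $z\mapsto(\langle \mathbf m_i,z\rangle,\langle \mathbf m_j,z\rangle)$ is a bijection $\Fp^2\to\Fp^2$. Hence the random variables $S_i(z)$ are pairwise independent, and
\[
 \Var\Bigl(\sum_i S_i\Bigr)=\sum_i \Var\bigl(a_i(t)\bigr),
\]
where $t$ is uniform on $\Fp$. Each $a_i$ has mean $n$.

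\emph{Lower bound for each $\Var(a_i)$.} Since the direction is special and $0\le a_i\le p-1$, the function $a_i$ is nonconstant modulo $p$. By Proposition~\ref{projections:degree} its polynomial has degree $d_i$ with $1\le d_i\le k-2<p$. Theorem~\ref{thm:weil} bounds every nontrivial character sum $\sum_t e_p(h a_i(t))$ by $(k-3)\sqrt p$. Proposition~\ref{prop: diszkrepancia szamitas} with $r=1$, $X=\Fp$, $B=(k-3)\sqrt p$ then gives, for every interval $I$,
\[
 \bigl|\#\{t:a_i(t)\in I\}-|I|\bigr|\le (k-3)\sqrt p\,L_p .
\]
Now write $\Var(a_i)=\min_c \frac1p\sum_t(a_i(t)-c)^2$ and fix an arbitrary $c$. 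Express $(x-c)^2$ as a layer-cake integral over superlevel sets $\{x:(x-c)^2>s\}$; each such set is the complement of an interval, so each can be counted using the discrepancy bound. Comparing with the uniform distribution on $\{0,\dots,p-1\}$, whose variance is $(p^2-1)/12$, this yields
\[
 \Var(a_i)\ \ge\ \frac{p^2-1}{12}-C\,k\,p^{3/2}L_p
\]
for an explicit small constant $C$. The point is that the total variation of $(x-c)^2$ on $[0,p-1]$ is $O(p^2)$, while the relative discrepancy is $O(kL_p/\sqrt p)$.

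\emph{Contradiction.} Summing over $i$ gives
\[
 \frac{k(p^2-1)}{12}-Ck^2p^{3/2}L_p\le\frac{p^2}{4}.
\]
For $k\ge4$ we have $k/12-1/4\ge k/48$, so the inequality forces roughly $\sqrt p\lesssim 48Ck\,L_p$. It therefore fails once $\sqrt p$ exceeds a suitable constant times $k\log p$.

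\emph{Main obstacle.} The hard step is the explicit variance lower bound from discrepancy, with constants tight enough to give $p\ge(64k\log(8k))^2$. The bound naturally carries $L_p\le 2+\log p$ rather than $\log(8k)$. To convert, I will use that $x\ge(64k\log 8k)^2$ implies $\sqrt x\ge 8k(2+\log x)$, by a routine monotonicity check of $\sqrt x/(2+\log x)$. The summation-by-parts constants must then be tracked so that $48C\le 8$ or similar. I would first try taking $c=n$ and bounding $\frac1p\sum_t\bigl((a_i(t)-n)^2-(\text{uniform analogue})\bigr)$ by $p^2$ times the relative discrepancy, via the layer-cake formula. Should the constants fall short, the fallback is to lower-bound the variance more crudely: use only the mass of $a_i$ in the two extreme intervals $[0,p/4)$ and $[3p/4,p)$, which gives $\Var\ge p^2/16-O(kp^{3/2}L_p)$. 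This still beats $p^2/4$ summed over $k\ge 5$; but for $k=4$ it only gives $4\cdot p^2/16=p^2/4$, exactly matching the bound $p^2/4$, so a sharper constant is needed there.
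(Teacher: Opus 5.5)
Your strategy is the paper's: $\Var\bigl(\sum_iS_i(Z)\bigr)$ equals $n(p-n)$ by the reconstruction identity and equals $\sum_i\Var(a_i)$ by pairwise independence. Each $\Var(a_i)$ is then bounded below via Weil, the case $r=1$ of Proposition~\ref{prop: diszkrepancia szamitas}, and an Abel-summation step. The gap is in the quantitative step, which you leave open, and the route you propose for it does not close.

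Put $h=(p-1)/2$ and $\delta=(k-3)L_p/\sqrt p$. Your layer-cake argument (superlevel sets of $(x-c)^2$ are complements of intervals) gives, for every real $c$,
\[
 \frac1p\sum_{t}(a_i(t)-c)^2\ \ge\ \frac{p^2-1}{12}+(c-h)^2-\delta\,\bigl(h+|c-h|\bigr)^2 ,
\]
hence $\Var(a_i)\ge\frac{p^2-1}{12}-\frac{\delta}{1-\delta}h^2$. This is slightly better than the paper's summation by parts over initial segments, which loses $h^2(2\delta+4\delta^2)$.

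In your notation the error corresponds to $C\approx\frac{k-3}{4k}$, which is at least $\frac16$ for $k\ge9$ and tends to $\frac14$. The hypothesis, in the form you plan to use it, only gives $x:=kL_p/\sqrt p\le\frac18$. So after your step $\frac{k}{12}-\frac14\ge\frac{k}{48}$, the target $48C\le 8$ is unattainable and the argument as planned fails for large $k$.

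The missing observation is that the Weil factor $d_i-1\le k-3$ exactly matches the deficit $\frac{k}{12}-\frac14=\frac{k-3}{12}$, so neither side should be coarsened. Use $k\delta=(k-3)x$ and $n(p-n)\le\frac{p^2-1}{4}$ (as $n$ is an integer and $p$ is odd). The comparison then becomes
\[
 \frac{(k-3)(p^2-1)}{12}\ \le\ \frac{(k-3)\,x}{1-\delta}\cdot\frac{(p-1)^2}{4},
\]
that is, $x\ge\frac{1-\delta}{3}\ge\frac{1-x}{3}$, which is false for $x\le\frac18$. This is the paper's bookkeeping: with its constants the condition reads $\frac1{12}\le\frac x2+x^2$, also contradicted by $x\le\frac18$. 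Your monotonicity reduction of $p\ge(64k\log 8k)^2$ to $x\le\frac18$ is also the paper's.

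Drop the fallback. Mass about $\frac14$ in each of $[0,p/4)$ and $[3p/4,p)$ only forces $\Var(a_i)\gtrsim p^2/32$, not $p^2/16$. So it would handle only $k\ge 9$, not all $k\ge5$ as you claim.
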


\begin{remark}
The proof gives the sharper $p$-dependent sufficient condition
$kL_p/\sqrt p\le1/8$. It also shows that, for every fixed
$c<(\sqrt{21}-3)/12$, all sufficiently large primes admit no such set when
$k\le c\sqrt p/L_p$.
\end{remark}

\subsection{One-variable discrepancy and moments}
\label{sec:one-variable}

For $a\in\Fp$, write $[a]_p$ for its standard integer representative in
$\{0,1,\ldots,p-1\}$. This identification is standard in this context, see,
for example, \cite{somlai}.
\begin{proposition}
\label{prop:poly-discrepancy}
Let $P\in\Fp[x]$ be nonconstant with $1\le d=\deg P\le D<p$, let $U$ be
uniform on $\Fp$, and set $X=[P(U)]_p$, regarded as an integer-valued random
variable. Then, for every interval $I\subseteq\{0,1,\ldots,p-1\}$,
\[
 \left|\mathbb P(X\in I)-\frac{|I|}{p}\right|
 \le \frac{(D-1)L_p}{\sqrt p}=:\delta_{D,p}.
\]
For $d=1$ the left-hand side is $0$.
\end{proposition}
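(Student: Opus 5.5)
This is the one-dimensional case ($r=1$) of Proposition~\ref{prop: diszkrepancia szamitas}, with Weil's bound (Theorem~\ref{thm:weil}) supplying the exponential-sum input. Take $X=\Fp$ with $N=p$ and $G=P\colon\Fp\to\Fp$. An interval $I\subseteq\{0,\dots,p-1\}$ corresponds to an interval of residues in $\Fp$, and the event $[P(U)]_p\in I$ is exactly the event $P(U)\in I$. So
\[
\mathbb P(X\in I)=\frac{\#\{t\in\Fp:P(t)\in I\}}{p}.
\]

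For the hypothesis of Proposition~\ref{prop: diszkrepancia szamitas}, fix $\xi\in\Fp^\times$. Then $\xi P$ is nonconstant of degree $d<p$, and Theorem~\ref{thm:weil} with $h=\xi$ gives
\[
\Bigl|\sum_{t}e_p(\xi P(t))\Bigr|\le (d-1)\sqrt p\le (D-1)\sqrt p.
\]
So we may take $B=(D-1)\sqrt p$. The proposition then yields
\[
\bigl|\#\{t:P(t)\in I\}-|I|\bigr|\le (D-1)\sqrt p\,L_p.
\]
Dividing by $p$ gives the bound $\delta_{D,p}=(D-1)L_p/\sqrt p$.

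For the case $d=1$, $P$ is a permutation of $\Fp$. Hence $P(U)$ is uniform, and $\mathbb P(X\in I)=|I|/p$ exactly. Equivalently, Theorem~\ref{thm:weil} gives $B=0$ in this case.

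There is no real obstacle. The only points needing care are that the identification $[\cdot]_p$ turns integer intervals into the residue intervals used in Lemma~\ref{lem:fourier egyutthato becsles}, and that the sign convention $e_p(\langle\xi,G\rangle)$ in Proposition~\ref{prop: diszkrepancia szamitas} matches Weil's sum for every nonzero $h$. The latter holds because the Weil bound is uniform over all $h\neq0$.
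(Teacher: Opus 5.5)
Your proposal is correct and is essentially the paper's own proof. Both take Proposition~\ref{prop: diszkrepancia szamitas} with $X=\Fp$, $N=p$, $r=1$, $G=P$, $T=I$ and $B=(d-1)\sqrt p\le(D-1)\sqrt p$ from Theorem~\ref{thm:weil}, divide by $p$, and treat $d=1$ via $P$ being a permutation of $\Fp$.
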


\begin{proof}
In Proposition~\ref{prop: diszkrepancia szamitas}, take the finite set $X$
there to be $\Fp$, and take $N=p$, $r=1$, $G=P$, and $T=I$. By
Theorem~\ref{thm:weil}, for every $h\in\Fp^\times$,
\[
 \left|\sum_{t\in\Fp}e_p(hP(t))\right|\le(d-1)\sqrt p,
\]
so the parameter $B$ in Proposition~\ref{prop: diszkrepancia szamitas} may
be taken to be $(d-1)\sqrt p$. Hence
\[
 \left|\mathbb P(X\in I)-\frac{|I|}{p}\right|
 \le\frac{(d-1)L_p}{\sqrt p}\le\delta_{D,p}.
\]
If $d=1$, then $P$ is a permutation of $\Fp$, so the discrepancy is zero.
\end{proof}

We next convert interval discrepancy into moment estimates. Let $U_0$ be
uniform on $\Fp$, so that $\mathbb P(U_0=a)=1/p$ for every $a\in\Fp$.
Then $[U_0]_p$ is an integer-valued random variable, uniformly distributed on
$\{0,1,\ldots,p-1\}$. Define
\[
 Z_p:=\frac{[U_0]_p}{p}.
\]
Thus $Z_p$ is real-valued, and all expectations and variances below are taken
over the real numbers. A direct calculation gives
\begin{equation}\label{eq:uniform-moments}
 \mathbb E Z_p=\frac{p-1}{2p},\qquad
 \Var(Z_p)=\frac{p^2-1}{12p^2}.
\end{equation}
\begin{proposition}\label{prop:poly-variance}
Let $1\le\deg P\le D<p$, let $U$ be uniform on $\Fp$, and put
$X=[P(U)]_p$. Then
\[
 \left|\mathbb E\frac Xp-\mathbb E Z_p\right|
 \le \left(1-\frac1p\right)\delta_{D,p},
\]
and
\[
 \left|\Var\!\left(\frac Xp\right)-\Var(Z_p)\right|
 \le \left(1-\frac1p\right)^2
 \left(\frac{\delta_{D,p}}2+\delta_{D,p}^2\right).
\]
In particular, for every fixed $D$,
$\Var(X)=(1/12+o_D(1))p^2$ for all nonconstant polynomials of
degree at most $D$.
\end{proposition}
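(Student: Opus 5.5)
Write $Y=X/p$, which takes values in $\{0,1/p,\ldots,(p-1)/p\}$, and let $F(j)=\mathbb P(X\ge j)$ and $F_0(j)=\mathbb P([U_0]_p\ge j)=(p-j)/p$ for $1\le j\le p-1$. The plan is to express both moments through these tail probabilities, where Proposition~\ref{prop:poly-discrepancy} applies directly. Each tail event $\{X\ge j\}$ is the event that $X$ lies in the interval $\{j,\ldots,p-1\}$, so Proposition~\ref{prop:poly-discrepancy} gives
\[
 |F(j)-F_0(j)|\le\delta_{D,p}\qquad(1\le j\le p-1).
\]

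For the mean we use the layer-cake identity
\[
 \mathbb E X=\sum_{j=1}^{p-1}F(j),
\]
and likewise for $[U_0]_p$. Subtracting the two identities and dividing by $p$ gives
\[
 \left|\mathbb E\frac Xp-\mathbb EZ_p\right|\le \frac{p-1}{p}\,\delta_{D,p},
\]
which is exactly the first claim, with the factor $1-1/p$.

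For the second moment we use
\[
 \mathbb E X^2=\sum_{j=1}^{p-1}(2j-1)F(j).
\]
Dividing by $p^2$ and subtracting the uniform case, the difference of second moments is bounded by $\delta_{D,p}\sum_{j=1}^{p-1}(2j-1)/p^2=\delta_{D,p}(p-1)^2/p^2$. This is too crude: combined with the mean estimate it yields a variance error of order $\delta$ rather than $\delta/2$. To get the constant $1/2$ I would center the comparison instead.

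Write
\[
 \Var(Y)=\mathbb E(Y-c)^2-(\mathbb EY-c)^2
\]
with $c$ the common midpoint $(p-1)/(2p)$. Then
\[
 \mathbb E(Y-c)^2=\sum_{j}w_j\,F(j)+\text{const},
\]
where the weights $w_j=\bigl((j-c')^2-(j-1-c')^2\bigr)/p^2$, with $c'=pc$, are proportional to $2j-1-2c'=2j-p$. These weights change sign at the middle, so
\[
 \sum_j|w_j|=\frac{1}{p^2}\sum_{j=1}^{p-1}|2j-p|=\frac{(p-1)^2}{2p^2}\quad\text{(for odd $p$)}.
\]
This gives $|\mathbb E(Y-c)^2-\mathbb E(Z_p-c)^2|\le\tfrac12(1-1/p)^2\delta_{D,p}$. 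The correction term is $(\mathbb EY-c)^2$, since $\mathbb EZ_p=c$, and by the mean estimate it is at most $(1-1/p)^2\delta_{D,p}^2$. Adding the two yields the stated bound $(1-1/p)^2(\delta/2+\delta^2)$.

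The main obstacle is this bookkeeping: choosing the centering so the weights have $\ell^1$ mass exactly $(p-1)^2/(2p^2)$. I would verify the identity $\sum_{j=1}^{p-1}|2j-p|=(p-1)^2/2$ carefully for odd $p$.

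The final claim then follows directly. We have $\delta_{D,p}=(D-1)L_p/\sqrt p\to0$ for fixed $D$, and $\Var(Z_p)\to1/12$ by \eqref{eq:uniform-moments}. Hence $\Var(X)=p^2\Var(Y)=(1/12+o_D(1))p^2$. In the case $d=1$ the distributions coincide, so both bounds hold trivially.
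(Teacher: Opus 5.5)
Your proposal is correct and is essentially the paper's argument. Your layer-cake identities over the tails $\{j,\ldots,p-1\}$ are the same summation by parts the paper carries out over the initial segments $\{0,\ldots,r\}$, which are the complementary intervals. Your centering at $c=(p-1)/(2p)$, with $\sum_j|w_j|=\frac12(1-1/p)^2$, is exactly the paper's choice $b_j=(j/p-m)^2$ with $\sum|b_{j+1}-b_j|=2m^2$. It is then combined with $\Var(Y)=\mathbb E(Y-c)^2-(\mathbb EY-c)^2$ in the same way as in the paper.
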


\begin{proof}
Write
\[
 \mu_j=\mathbb P(X=j),\qquad \nu_j=\frac1p,\qquad
 \eta_j=\mu_j-\nu_j\qquad(0\le j\le p-1),
\]
and, for $0\le r\le p-1$, put $A_r=\sum_{j=0}^r\eta_j$. For the initial
interval $I_r=\{0,1,\ldots,r\}$ we have
\[
 A_r=\mathbb P(X\in I_r)-\frac{|I_r|}{p}.
\]
Thus Proposition~\ref{prop:poly-discrepancy} gives
$|A_r|\le\delta_{D,p}$ for every $r$, while $A_{p-1}=0$ since
$\sum_{j=0}^{p-1}\mu_j=\sum_{j=0}^{p-1}\nu_j=1$.

We now express a weighted sum of the $\eta_j$ in terms of these partial
sums. Since $\eta_0=A_0$ and $\eta_j=A_j-A_{j-1}$ for $j\ge1$, for any real
sequence $b_0,\ldots,b_{p-1}$,
\[
\begin{aligned}
 \sum_{j=0}^{p-1}b_j\eta_j
 &=b_0A_0+\sum_{j=1}^{p-1}b_j(A_j-A_{j-1})=\sum_{j=0}^{p-1}b_jA_j-\sum_{j=0}^{p-2}b_{j+1}A_j\\
 &=-\sum_{j=0}^{p-2}A_j(b_{j+1}-b_j),
\end{aligned}
\]
where the last equality uses $A_{p-1}=0$. Therefore
\begin{equation}\label{eq:variation}
 \left|\sum_{j=0}^{p-1}b_j(\mu_j-\nu_j)\right|
 \le\delta_{D,p}\sum_{j=0}^{p-2}|b_{j+1}-b_j|.
\end{equation}

Applying \eqref{eq:variation} with $b_j=j/p$ gives
\[
 \left|\mathbb E\frac Xp-\mathbb E Z_p\right|
 \le \left(1-\frac1p\right)\delta_{D,p},
\]
since $\sum_{j=0}^{p-2}|b_{j+1}-b_j|=(p-1)/p$.

Next, put $m=\mathbb E Z_p$ and apply \eqref{eq:variation} with
$b_j=(j/p-m)^2$. In this case,
\[
 \sum_{j=0}^{p-1}b_j\mu_j=\mathbb E(X/p-m)^2,
 \qquad
 \sum_{j=0}^{p-1}b_j\nu_j=\Var(Z_p).
\]
Since $p$ is odd, the sequence $b_j$ decreases from $m^2$ to $0$
and then increases back to $m^2$, so
\[
 \sum_{j=0}^{p-2}|b_{j+1}-b_j|
 =2m^2=\frac12\left(1-\frac1p\right)^2.
\]
Thus \eqref{eq:variation} yields
\[
 \left|\mathbb E(X/p-m)^2-\Var(Z_p)\right|
 \le \frac12\left(1-\frac1p\right)^2\delta_{D,p}.
\]

Finally,
\[
 \Var(X/p)=\mathbb E(X/p-m)^2-\bigl(\mathbb E(X/p)-m\bigr)^2,
\]
so the two preceding estimates give the stated variance bound.
\end{proof}
In the proof of
Theorem~\ref{thm:quantitative}, we only need the lower bound of
Proposition~\ref{prop:poly-variance} so we  state it  separately.
\begin{smcorollary}\label{cor:poly-variance-lower}
Under the assumptions of Proposition~\ref{prop:poly-variance},
\[
 \frac{\Var(X)}{p^2}\ge \frac{p^2-1}{12p^2}
 -\left(1-\frac1p\right)^2
 \left(\frac{\delta_{D,p}}2+\delta_{D,p}^2\right).
\]
\end{smcorollary}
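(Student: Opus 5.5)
The corollary follows immediately from the variance estimate of Proposition~\ref{prop:poly-variance}, by keeping only its lower half and rescaling. The plan has three steps.

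First, the two-sided bound
\[
 \left|\Var\!\left(\frac Xp\right)-\Var(Z_p)\right|
 \le \left(1-\frac1p\right)^2
 \left(\frac{\delta_{D,p}}2+\delta_{D,p}^2\right)
\]
in particular gives the one-sided inequality
\[
 \Var\!\left(\frac Xp\right)\ge \Var(Z_p)-\left(1-\frac1p\right)^2\left(\frac{\delta_{D,p}}2+\delta_{D,p}^2\right).
\]

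Second, substitute the exact value $\Var(Z_p)=(p^2-1)/(12p^2)$ from \eqref{eq:uniform-moments}.

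Third, use that variance is homogeneous of degree two under real scaling, so $\Var(X/p)=\Var(X)/p^2$. Here $X$ is regarded as a real-valued random variable via the representatives $[\,\cdot\,]_p$, exactly as in Proposition~\ref{prop:poly-variance}, so this scaling identity applies.

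Combining the three steps gives precisely the displayed inequality. There is no genuine obstacle; the only point requiring care is to confirm that the hypotheses $1\le\deg P\le D<p$ are exactly those of Proposition~\ref{prop:poly-variance}, which the corollary assumes verbatim.
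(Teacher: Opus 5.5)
Your proposal is correct and matches the paper's proof: the paper likewise keeps only the lower half of the two-sided variance estimate in Proposition~\ref{prop:poly-variance}. It then substitutes $\Var(Z_p)=(p^2-1)/(12p^2)$ from \eqref{eq:uniform-moments} and uses $\Var(X/p)=\Var(X)/p^2$.
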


\begin{proof}
This is the lower half of the variance estimate in
Proposition~\ref{prop:poly-variance}, together with
\eqref{eq:uniform-moments}.
\end{proof}

\subsection{Projection functions and pairwise independence}

Suppose that $S\subseteq\Fp^2$ has exactly $k$ special directions and
$k<p+1$. Then a non-special direction exists, so $|S|$ is divisible by $p$.
As in Section~\ref{sec:projection}, remove complete affine lines if necessary;
for $k\ge4$ this does not change the set of special directions. We continue
to write $S$ for the reduced set and write $|S|=np$.

Let $\mathbf m_1,\ldots,\mathbf m_k$ be the special directions, and retain
the notation $a_i$ and $S_i$ from Section~\ref{sec:projection}. Thus, writing
$\ell_i(x,y)=m_{i,x}x+m_{i,y}y$, we have
\[
 a_i(t)=\#\{z\in S:\ell_i(z)=t\},\qquad S_i(z)=a_i(\ell_i(z)).
\]
Since the reduced set contains no complete affine line,
$0\le a_i(t)\le p-1$. Let $P_i\in\Fp[t]$ be the projection polynomial
representing $a_i$. Proposition~\ref{projections:degree} gives
\begin{equation}\label{eq:degree-range}
 1\le\deg P_i\le k-2,\qquad a_i(t)=[P_i(t)]_p.
\end{equation}

The reconstruction identity \eqref{eq4.3} becomes
\begin{equation}\label{eq:sum}
 \sum_{i=1}^kS_i(z)=(k-1)n+p\one_S(z).
\end{equation}

\begin{lemma}[Pairwise independence]\label{lem:pairwise}
Let $Z$ be uniform on $\Fp^2$ and let $i\ne j$. Then
$V_i=\ell_i(Z)$ and $V_j=\ell_j(Z)$ are independent and uniform on $\Fp$.
Consequently, $X_i=S_i(Z)$ and $X_j=S_j(Z)$ are independent.
\end{lemma}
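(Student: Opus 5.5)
The plan is to show that the linear map $\Phi=(\ell_i,\ell_j)\colon\Fp^2\to\Fp^2$ is a bijection. Then push the uniform distribution of $Z$ forward through $\Phi$, and finally apply the deterministic functions $a_i,a_j$.

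\emph{Step 1: $\Phi$ is a bijection.} The linear forms $\ell_i(x,y)=m_{i,x}x+m_{i,y}y$ and $\ell_j(x,y)=m_{j,x}x+m_{j,y}y$ come from distinct points $\mathbf m_i\ne\mathbf m_j$ of $\PP^1(\Fp)$. Hence the vectors $(m_{i,x},m_{i,y})$ and $(m_{j,x},m_{j,y})$ are nonproportional, and the matrix
\[
\begin{pmatrix} m_{i,x} & m_{i,y}\\ m_{j,x} & m_{j,y}\end{pmatrix}
\]
has nonzero determinant over $\Fp$. Therefore $\Phi$ is an invertible linear map of $\Fp^2$, in particular a bijection.

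\emph{Step 2: joint uniformity of $(V_i,V_j)$.} Since $Z$ is uniform on $\Fp^2$ and $\Phi$ is a bijection, for every $(s,t)\in\Fp^2$ we have
\[
\mathbb P(V_i=s,V_j=t)=\mathbb P\bigl(Z=\Phi^{-1}(s,t)\bigr)=\frac1{p^2}.
\]
Summing over $t$ gives $\mathbb P(V_i=s)=1/p$, and summing over $s$ gives $\mathbb P(V_j=t)=1/p$. Thus $V_i$ and $V_j$ are each uniform on $\Fp$. Moreover the joint probability equals the product of the marginals, so $V_i$ and $V_j$ are independent.

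\emph{Step 3: independence of $X_i,X_j$.} By definition $X_i=S_i(Z)=a_i(V_i)$ and $X_j=a_j(V_j)$, so each is a deterministic function of one of the independent variables. Functions of independent random variables are independent: for any sets $A,B$ of integers,
\[
\mathbb P(X_i\in A,X_j\in B)=\mathbb P\bigl(V_i\in a_i^{-1}(A),\,V_j\in a_j^{-1}(B)\bigr),
\]
and this probability factors by Step 2.

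There is no real obstacle here. The only point to check is Step 1, namely that distinct projective directions give linearly independent forms.
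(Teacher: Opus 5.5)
Your proposal is correct and follows the paper's proof essentially step for step. Distinct special directions give linearly independent forms, so $z\mapsto(\ell_i(z),\ell_j(z))$ is a bijection of $\Fp^2$. Hence $(V_i,V_j)$ is uniform on $\Fp^2$, which gives independent uniform marginals, and $X_i=a_i(V_i)$, $X_j=a_j(V_j)$ inherit independence. You simply spell out the marginal and set-factorization checks that the paper leaves implicit.
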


\begin{proof}
The kernels of $\ell_i$ and $\ell_j$ are distinct, so the two linear forms
are linearly independent. Hence $z\mapsto(\ell_i(z),\ell_j(z))$ is a
bijection of $\Fp^2$. Therefore, for every $a,b\in\Fp$,
\[
 \mathbb P(V_i=a,V_j=b)=\frac1{p^2}
 =\mathbb P(V_i=a)\mathbb P(V_j=b).
\]
Thus $V_i$ and $V_j$ are independent and uniform. Since $X_i=a_i(V_i)$ and
$X_j=a_j(V_j)$ are functions of independent random variables, they are
independent as well.
\end{proof}

\begin{remark}
Only pairwise independence is used. Moreover, three projections need not be jointly
independent. For example, if $Z=(X,Y)$ is uniform on $\Fp^2$, then
$X,Y,X+Y$ are pairwise independent but not jointly independent, which is by the way an important step towards understanding the construction of Kiss and Somlai \cite{kisssomlai} of a set having exactly 3 special directions.
\end{remark}
\subsection{Estimating the variance}
Let $Z$ be uniform on $\Fp^2$ and let
$X_i=S_i(Z)$ and $W=\sum_{i=1}^kX_i$.
Here each $X_i$ takes its values in $\{0,1,\ldots,p-1\}$, as in
\eqref{eq:degree-range}, and $W$ is the sum of these integer values,
without reduction modulo $p$. All expectations and variances below
are taken over $\mathbb R$.

Since $\ell_i(Z)$ is uniform on $\Fp$,
\begin{equation}\label{eq:projection-mean}
 \mathbb E X_i=\frac1p\sum_{t\in\Fp}a_i(t)=n.
\end{equation}
For $k-2<p$, let
$ \delta:=\frac{(k-3)L_p}{\sqrt p}$.
Applying the first estimate of Proposition~\ref{prop:poly-variance} to
$P_i$ and using \eqref{eq:uniform-moments} gives
\begin{equation}\label{eq:density-estimate}
 \left|\frac np-\frac{p-1}{2p}\right|
 \le\left(1-\frac1p\right)\delta.
\end{equation}
In particular, for fixed $k$, the density $|S|/p^2=n/p$ tends to $1/2$ as $p$ tends to infinity.

By Lemma~\ref{lem:pairwise}, the covariances of distinct $X_i$ vanish.
Together with \eqref{eq:sum}, this gives
\begin{equation}\label{eq:variance-upper}
 \Var(W)=\sum_{i=1}^k\Var(X_i)
 =p^2\Var(\one_S(Z))=n(p-n)\le\frac{p^2}{4}.
\end{equation}
\subsection{Proof of Theorem~\ref{thm:quantitative}}

\begin{proof}
Suppose that such a set $S$ exists. The assumed bound on $p$ implies
$p\ge11$ and $k<p+1$, so the preceding reduction and notation apply. Put
\[
 x=\frac{kL_p}{\sqrt p},\qquad
 \delta=\frac{(k-3)L_p}{\sqrt p}=\left(1-\frac3k\right)x.
\]
Corollary~\ref{cor:poly-variance-lower}, applied with $D=k-2$, and
\eqref{eq:variance-upper} give
\[
 \frac14\ge k\left[\frac{p^2-1}{12p^2}
 -\left(1-\frac1p\right)^2\left(\frac\delta2+\delta^2\right)\right].
\]
Since $(1-1/p)^2\le1$, this implies
\begin{equation}\label{eq:main-comparison}
 \frac{k-3}{12}-\frac{k}{12p^2}
 \le (k-3)\left(\frac x2+x^2\right).
\end{equation}
If $x\le1/8$, then the right-hand side is at most
$5(k-3)/64$. Hence \eqref{eq:main-comparison} would imply
\[
 \frac{k-3}{192}\le\frac{k}{12p^2},
\]
which is impossible for $p\ge11$, since $k/(k-3)\le4$.

It remains to verify $x\le1/8$. The function
$(2+\log t)/\sqrt t$ is decreasing for $t>1$, and $L_p\le2+\log p$. Since
$p\ge(64k\log(8k))^2$,
\[
 x\le\frac{1+\log(64k\log(8k))}{32\log(8k)}\le\frac18.
\]
For the last inequality we used $\log(8k)\le k$ and
$1+\log(64k^2)\le4\log(8k)$ for $k\ge4$. This contradiction proves the
theorem.
\end{proof}

\subsection{The critical case $k=3$}

The comparison is sharp at the level of leading constants. For a
nonconstant polynomial of fixed degree, Proposition~\ref{prop:poly-variance}
gives variance $(1/12+o(1))p^2$. Thus $k$ special projections contribute
asymptotically $(k/12)p^2$, whereas \eqref{eq:variance-upper} is at most
$p^2/4$. These constants coincide when $k=3$.

In fact, equality holds exactly for the triangle
\[
 \mathcal T_p=\{(x,y)\in\Fp^2:0\le y<x\le p-1\}.
\]
Here $|\mathcal T_p|=p(p-1)/2$, so $n=(p-1)/2$, and the three special
projection polynomials are linear permutation polynomials. Hence each
corresponding random variable has variance $(p^2-1)/12$, and
\[
 \sum_{i=1}^3\Var(X_i)=3\frac{p^2-1}{12}
 =\frac{p^2-1}{4}=n(p-n).
\]
Thus the two variance calculations agree exactly for $k=3$.

\section{Multisets with prescribed special directions}
\label{sec:multisets}

In this section we consider multisets on $\Fp^2$, identified with
nonnegative integer-valued functions
$$ f:\Fp^2\longrightarrow \mathbb Z_{\ge 0}.$$
For such a function, a direction is called non-special if the sum of $f$
along every line in that direction is the same, and special otherwise.

We show that, in contrast with the situation for sets, an arbitrary collection
of at most $p$ directions can occur as the set of special directions of a
multiset taking only the values $0,1,2$.

\begin{theorem}\label{thm:prescribed-multiset-directions}
Let $p>2$ be a prime, and let
$ \mathcal D\subseteq\PP^1(\Fp) $
be any set of directions with $|\mathcal D|\le p$. Then there exists a
multiset
$$
 f:\Fp^2\longrightarrow\{0,1,2\}
$$
whose set of special directions is exactly $\mathcal D$.
\end{theorem}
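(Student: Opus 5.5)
The plan is to build $f$ as a sum of indicator functions of lines, one line in each prescribed direction, chosen so that no point lies on more than two of them. The natural family is the set of tangent lines to the parabola $y=-x^2/4$, as announced in the introduction. For $a\in\Fp$ put
\[
 \ell_a=\{(x,y)\in\Fp^2:\ y=ax+a^2\}.
\]
Since $p$ is odd, substituting $y=-x^2/4$ gives $x^2/4+ax+a^2=(x/2+a)^2$, so $\ell_a$ is indeed the tangent at $x=-2a$. The line $\ell_a$ has slope $a$, so the lines $\ell_a$, $a\in\Fp$, realize exactly the $p$ non-vertical directions.

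\textbf{Step 1: reduce to non-vertical directions.} If $\mathcal D=\emptyset$, take $f\equiv0$, which has no special directions. Otherwise, since $|\mathcal D|\le p<p+1=|\PP^1(\Fp)|$, some direction is missing from $\mathcal D$. As noted in Section~\ref{sec:projection}, invertible linear maps act transitively on directions and preserve the special/non-special type of every direction (they permute parallel classes of lines). So we may apply a linear map sending a missing direction to the vertical one, and assume that $\mathcal D$ consists of non-vertical directions, with slopes forming a set $A\subseteq\Fp$.

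\textbf{Step 2: the construction and its values.} Define $f=\sum_{a\in A}\one_{\ell_a}$. A point $(x_0,y_0)$ lies on $\ell_a$ exactly when $a^2+x_0a-y_0=0$. This is a monic quadratic in $a$, so it has at most two roots, and hence $f$ takes values in $\{0,1,2\}$.

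\textbf{Step 3: identify the special directions.} The projection (line sum) of $f$ in a direction is additive in $f$, so it suffices to analyse a single line $\ell$.
\begin{itemize}
\item In any direction different from that of $\ell$, every parallel line meets $\ell$ in exactly one point, so the line sums of $\one_\ell$ are constant, equal to $1$.
\item In the direction of $\ell$ itself, the line sums are $p$ on $\ell$ and $0$ on the other parallel lines, which is nonconstant.
\end{itemize}
Since distinct $a\in A$ give distinct slopes, each direction is the direction of at most one summand. In a direction not in $\mathcal D$, the line sums of $f$ are therefore constant, equal to $|A|$. In the direction of $\ell_a$ with $a\in A$, the line sums are a constant $|A|-1$ plus a nonconstant profile, hence nonconstant. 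So the special directions of $f$ are exactly $\mathcal D$.

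The only genuinely delicate point is the bound $f\le2$, which is why the tangent family (at most two tangents through any point of the plane) is used. The only role of the hypothesis $|\mathcal D|\le p$ is to make the reduction in Step 1 possible. Undoing the linear map from Step 1 finishes the proof.
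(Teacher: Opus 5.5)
Your proposal is correct and follows essentially the same route as the paper: the tangent-line family $y=ax+a^2$, the bound $f\le 2$ from the quadratic $a^2+x_0a-y_0=0$, the line-sum count, and the linear map sending a missing direction to the vertical one. Your separate handling of $\mathcal D=\emptyset$ is unnecessary but harmless.
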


\begin{proof}
We first construct such multisets when the vertical direction does not belong
to $\mathcal D$. Identifying the nonvertical directions with their slopes in
$\Fp$, write
$$
 \mathcal D=A\subseteq\Fp.
$$
For $a\in\Fp$, let
$$
 L_a=\{(x,y)\in\Fp^2:y=a^2+ax\}.
$$
The lines $L_a$, $a\in\Fp$, have pairwise distinct slopes. Define
$$
 f_A=\sum_{a\in A}\one_{L_a}.
$$

We first determine the special directions of $f_A$. Let $k=|A|$ and fix $a\in A$. Along the line $L_a$ itself the contribution of $\one_{L_a}$ is $p$, while every other line $L_b (b\in A\setminus \{a\})$, intersects $L_a$ in exactly one point.
Hence
$$
\sum_{z\in L_a} f_A(z)=p+k-1.
$$
On the other hand, every line parallel to $L_a$ and distinct from $L_a$ is disjoint from $L_a$ and intersects each  $L_b$, in exactly one point for every $ b\in A\setminus \{a\}$. Thus its total multiplicity is $k-1$.
Therefore the direction of slope $a\in A$ is special.

Now let $c\in \Fp\setminus A$. Every line of slope $c$ intersects each $L_a, a\in A$, in exactly one point, and consequently every such line has total multiplicity $k$. Hence, the direction of slope $c$ is non-special. The same is true for the vertical direction. It follows that the set of special directions of $f_A$ is exactly $A$.

It remains to show that $f_A$ takes only values $0,1,2$. For a fixed point $(x,y)\in \Fp^2$, the lines $L_a, a\in A$ passing through $(x,y)$ are precisely those for which
$$
y=a^2+ax,
$$
or equivalently,
$a^2+xa-y=0.$ This is a quadratic equation in $a$, and hence has at most two solutions in $\Fp$.
Therefore, $$f_A(x,y)\in \{0,1,2\} \qquad \textrm{ for every } (x,y)\in \Fp^2.$$

Finally, let $\mathcal D\subseteq \mathbb{P}^1(\Fp)$ be arbitrary with $|\mathcal{D}|\le p$. Since $\mathbb{P}^1(\Fp)$ has $p+1$  directions, we can choose a direction $d_0\not\in \mathcal{D}$.
There is an invertible linear transformation of $\Fp^2$ sending $d_0$ to the vertical direction. Under this transformation, $\mathcal{D}$ is mapped to a set of non-vertical directions, so the previous construction can be applied. If we apply the inverse transformation to the corresponding multiset, then it preserves the pointwise multiplicity and permutes the directions so that we obtain a multiset with values from $\{0,1,2\}$ and whose special directions are exactly $\mathcal{D}$.
\end{proof}

\begin{remark}
   The family of lines used in the proof has a geometric interpretation. The lines $L_a: ~ y=a^2+ax$ are precisely the tangent lines to the parabola $y=-x^2/4$.
Thus, the fact that the multiset constructed above is $\{0,1,2\}$-valued is consistent with the fact that through a point there are at most two tangents to a nondegenerate conic. Moreover, the derivative of a parabola is a linear function which is a permutation polynomial so we have a tangent for every possible slope.
\end{remark}
\begin{corollary}
For every prime $p>2$ and every integer $k$ with
\[
 0\le k\le p,
\]
there exists a $\{0,1,2\}$-valued multiset on $\Fp^2$ having exactly
$k$ special directions.
\end{corollary}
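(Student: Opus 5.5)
The plan is to apply Theorem~\ref{thm:prescribed-multiset-directions} directly, choosing a suitable prescribed set of directions $\mathcal D$ of the desired size.

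Fix a prime $p>2$ and an integer $k$ with $0\le k\le p$. Since $\PP^1(\Fp)$ has exactly $p+1$ elements and $k\le p<p+1$, we can choose a subset $\mathcal D\subseteq\PP^1(\Fp)$ with $|\mathcal D|=k$. Concretely, identifying the nonvertical directions with their slopes, take $\mathcal D=\{0,1,\ldots,k-1\}\subseteq\Fp$. This is a set of $k$ distinct nonvertical directions, so the first part of the proof of Theorem~\ref{thm:prescribed-multiset-directions} applies verbatim. It produces the multiset
\[
 f_{\mathcal D}=\sum_{a\in\mathcal D}\one_{L_a},\qquad L_a=\{(x,y):y=a^2+ax\},
\]
which takes values in $\{0,1,2\}$ and whose set of special directions is exactly $\mathcal D$. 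Hence $f_{\mathcal D}$ has exactly $k$ special directions.

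The only points needing a brief check are the edge cases. For $k=0$ we have $\mathcal D=\emptyset$, and $f_{\mathcal D}\equiv 0$, whose line sums are all $0$; so every direction is non-special, consistent with the theorem, and the value set $\{0\}\subseteq\{0,1,2\}$ is allowed. For $k=p$, all nonvertical directions are special and the vertical one is not, which is again covered by the theorem since $|\mathcal D|=p$.

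I expect no real obstacle. The statement is a direct specialization of Theorem~\ref{thm:prescribed-multiset-directions}, and the only content beyond it is the observation that a $k$-element subset of $\PP^1(\Fp)$ exists for every $0\le k\le p$.
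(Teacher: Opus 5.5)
Your proposal is correct and follows the paper's own proof: choose a $k$-element set $\mathcal D\subseteq\PP^1(\Fp)$, which exists since $k\le p<p+1$, and invoke Theorem~\ref{thm:prescribed-multiset-directions}. Taking $\mathcal D$ nonvertical and using $f_{\mathcal D}=\sum_{a\in\mathcal D}\one_{L_a}$ directly is a harmless concretization that skips the final change-of-coordinates step, and your checks of the cases $k=0$ and $k=p$ are fine.
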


\begin{proof}
Choose any set $\mathcal D\subseteq\PP^1(\Fp)$ of cardinality $k$ and apply
Theorem~\ref{thm:prescribed-multiset-directions}.
\end{proof}

The construction above necessarily leaves one direction non-special, and
therefore realizes at most $p$ special directions. The remaining case, in
which all $p+1$ directions are special, can also occur, in fact, for $p\ge5$ it can already be realized by a set of cardinality $p$ as follows.
\newline
Let $S$ be the following subset of $\Fp^2$ of size $p$
\[S:=\left\{ (0,i)\in \Fp^2 \mid 1\le i \le p-2 \right\} \cup
\{(1,p-1)\} \cup \{(a,p-1)\}, \quad a \ne 0,1.
\]
If $p>3$, then $S$ determines the vertical direction and for every odd prime $p$ it determines the directions of slope $i$ for $1\le i\le p-2$. The direction of slope $0$ is determined by the pair $(1,p-1)$ and $(a,p-1)$. Now the direction of slope $p-1$ is determined if and only if $-a\equiv p-1-i \pmod{p}$  for some $1\le i \le p-2$. Since $p\ge 5$, we may choose $a$ and $i$ for this equation to be satisfied.
Clearly, $S$ is a set of size $p$ determining every direction if $p\ge 5$.
\newline
For $p=3$ this is impossible for a set of cardinality $p$, since three
points determine at most $\binom{3}{2}=3$ directions. Hence every
three-point subset of $\F_3^2$ misses a direction and consequently tiles
$\F_3^2$ by a line in that direction.

\medskip
The following question arises here. Let $G$ be a finite abelian group. We say that two elements $\psi_1,\psi_2$ of $\hat{G}$ are equivalent if and only if $\langle \psi_1 \rangle=\langle \psi_2 \rangle$.
Let $\psi_1,\psi_2, \ldots ,\psi_k$ be a set of representatives of the equivalence classes of $\widehat{G}$, where $\psi_1$ is the  trivial character.

 Let $f \colon G \to \mathbb{Z}_{\ge0}$. Since $f$ is integer-valued, the support of $\widehat{f}$ is the union of equivalence classes of $\widehat{G}$. We identify its nontrivial part with a subset of $[k]\setminus\{1\}$, where $[k]=\{1,2,\ldots, k\}$. For every nonempty subset $C \subseteq [k]\setminus \{1\}$, let $m_C$ be the smallest integer $m$ such that there is a function $f \colon G \to \{0,1, \dots,m\}$ whose nontrivial support consists exactly of the equivalence classes indexed by $C$. In other words,
 $$\widehat f(\psi_i)\ne0 \quad\Longleftrightarrow\quad i\in C, \qquad i=2,\ldots,k. $$
\begin{definition}
For a finite abelian group we define the \emph{height of $G$} as follows.
$$h(G):=\max_{\emptyset \ne C \subset [k]\setminus \{1\} } m_C.$$
\end{definition}

\begin{problem}
We suggest determining the height of finite abelian groups.
    \end{problem}
We conjecture that the height of a cyclic group of order $n$ is  at most 2 if the prime divisors of $n$ are large enough compared to the number of prime divisors of $n$.  Note that $h(\mathbb{Z}_p)=1$.

 Theorem~\ref{thm:prescribed-multiset-directions} shows that
$h(\Fp^2)\le2$. Indeed, every proper nonempty collection of directions can
be realized by a $\{0,1,2\}$-valued multiset, while the collection of all
$p+1$ directions is realized by 
characteristic function of a single point.
Since no set can have exactly two special directions, whereas
Theorem~\ref{thm:prescribed-multiset-directions} gives a
$\{0,1,2\}$-valued multiset with exactly two special directions, we obtain
$h(\Fp^2)=2$.

We claim that the same holds for $\mathbb Z_{pq}$.
\begin{proposition}
$h(\mathbb{Z}_{pq})=2$ if $p$ and $q$ are different primes.
\end{proposition}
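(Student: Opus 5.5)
The plan is to make everything explicit via the Chinese remainder theorem. Identify $\mathbb Z_{pq}\cong\mathbb Z_p\times\mathbb Z_q$ and regard $f$ as a $p\times q$ array $f(x,y)$, with $x\in\mathbb Z_p$ indexing rows and $y\in\mathbb Z_q$ indexing columns. Characters are $\psi_{a,b}(x,y)=e_p(ax)\,e_q(by)$, and two characters are equivalent exactly when they have the same order. So there are $k=4$ classes: the trivial one, the class $\{(a,0):a\ne0\}$ of order $p$, the class $\{(0,b):b\ne0\}$ of order $q$, and the class $\{(a,b):ab\ne0\}$ of order $pq$. Consequently there are only $7$ nonempty sets $C$, and I would check them one by one.

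The first step is to translate vanishing on each class into a combinatorial condition, by one–dimensional Fourier inversion in each variable:
\begin{itemize}
\item $\widehat f$ vanishes on the order-$p$ class if and only if the row sums $\sum_y f(x,y)$ are independent of $x$;
\item it vanishes on the order-$q$ class if and only if the column sums are constant;
\item it vanishes on the order-$pq$ class if and only if $f$ has the additive form $f(x,y)=g(x)+h(y)$.
\end{itemize}
The last equivalence holds because the Fourier support then lies on the two axes.

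\emph{Upper bound $h(\mathbb Z_{pq})\le 2$.} For each $C$ I give a $\{0,1,2\}$-valued example.
\begin{itemize}
\item $C=\{p\}$: take $f=\one_{\{x=0\}}$, and symmetrically $\one_{\{y=0\}}$ for $C=\{q\}$.
\item $C=\{p,q\}$: take $f=\one_{\{x=0\}}+\one_{\{y=0\}}$.
\item $C=\{p,q,pq\}$: take a single point $f=\one_{(0,0)}$.
\item $C=\{p,pq\}$: take the graph $f(x,y)=\one_{\{x=\varphi(y)\}}$ of a nonconstant map $\varphi\colon\mathbb Z_q\to\mathbb Z_p$. Its column sums are all $1$. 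Its row sums $|\varphi^{-1}(x)|$ cannot all be equal, since that would force $p\mid q$. It is not additive, because a $0/1$ additive function depends on one variable only, while this $f$ is neither a union of full rows nor a union of full columns. The case $\{q,pq\}$ is symmetric.
\item $C=\{pq\}$: take $f=1+\one_{(0,0)}+\one_{(1,1)}-\one_{(0,1)}-\one_{(1,0)}$. This has values in $\{0,1,2\}$, row sums $q$ and column sums $p$. It is not additive, since its mixed second difference at the $2\times2$ block $\{0,1\}\times\{0,1\}$ is $4\ne0$.
\end{itemize}

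\emph{Lower bound $h(\mathbb Z_{pq})\ge 2$.} It suffices to show that $m_C>1$ for $C=\{p,q\}$. (The case $C=\{pq\}$ would also work.) Suppose $f\in\{0,1\}$ with $f=g+h$, where both $g$ and $h$ are nonconstant. Pick $x_1,x_2$ with $g(x_1)<g(x_2)$ and $y_1,y_2$ with $h(y_1)<h(y_2)$. Then $g(x_1)+h(y_1)<g(x_1)+h(y_2)<g(x_2)+h(y_2)$ are three distinct values of $f$, which is impossible for a $0/1$ function. Hence no set realizes $\{p,q\}$, so $m_{\{p,q\}}=2$. Combined with the upper bound, $h(\mathbb Z_{pq})=2$.

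The main obstacle I anticipate is the case $C=\{pq\}$, where sets alone fail. With $r$ the common row sum and $c$ the common column sum, a $0/1$ array would satisfy $pr=qc$, so $q\mid r\le q$, forcing $r\in\{0,q\}$ and $f$ constant. This case is also the one that genuinely needs multiplicity $2$, and the example above has to be checked carefully against non-additivity.
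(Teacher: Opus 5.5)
Your proof is correct. For $C=\{pq\}$ it coincides with the paper's: the same example $f_0$, and the same divisibility argument showing that no set realizes it.

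The genuine difference is how you exclude a $\{0,1\}$-valued realization of $C=\{p,q\}$. The paper invokes the cited R\'edei-type structure theorem: a nonnegative integer function on $\mathbb Z_{pq}$ whose Fourier transform vanishes on the elements of order $pq$ is a \emph{nonnegative} integer combination of cosets of the subgroups of orders $p$ and $q$. It then uses the fact that any two such cosets of different type intersect. You use only the elementary fact that such an $f$ has the form $g(x)+h(y)$, with no positivity in the decomposition, and then the three-distinct-values argument. Your route is self-contained and shows that the positivity part of the cited theorem is not needed here. The paper's route yields more structural information about all such $f$.

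You also give explicit examples for all seven subsets $C$. The paper leaves the five cases with $m_C=1$ to the reader, so your upper bound $h(\mathbb Z_{pq})\le2$ is actually more complete.

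Two small steps should be made explicit. First, $g$ and $h$ may be taken real-valued: since $f$ is real, replace them by their real parts. Your comparison $g(x_1)<g(x_2)$ needs this. Second, the hypothesis that both $g$ and $h$ are nonconstant is forced by $C=\{p,q\}$. The row sums are $qg(x)+\sum_y h(y)$ and the column sums are $\sum_x g(x)+ph(y)$, and these are nonconstant exactly when $g$, respectively $h$, is nonconstant.
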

\begin{proof}
It follows from \cite{redeiegyseggyok} that a function $f\colon \mathbb{Z}_{pq} \to \mathbb{Z}_{\ge 0}$ satisfies $\hat{f}(x)=0$ whenever $x$ is of order $pq$ if and only if $f$ is a nonnegative integer linear combination of characteristic functions of translates of $\{0,p,2p,\ldots ,(q-1)p\}$ and $\{0,q,2q,\ldots ,(p-1)q\}$. We can call these a $q$-line and a $p$-line, respectively.

Suppose first that $C$ consists only of the equivalence class of elements
of order $pq$. If $f$ were $\{0,1\}$-valued, then the vanishing of
$\widehat f$ on the elements of orders $p$ and $q$ would imply that the
total sum of the values of $f$ is divisible by both $p$ and $q$, and hence
by $pq$. Thus a nonzero such $f$ would have to be identically $1$, which is
impossible since then all nontrivial Fourier coefficients vanish. Hence
$m_C\ge2$. On the other hand, under the identification
$\mathbb Z_{pq}\cong\mathbb Z_p\times\mathbb Z_q$, let
$$
f_0=\mathbf1_{\mathbb Z_{pq}}
+\mathbf1_{(0,0)}+\mathbf1_{(1,1)}
-\mathbf1_{(0,1)}-\mathbf1_{(1,0)}.
$$
Then $f_0$ is a $\{0,1,2\}$-valued multiset whose nontrivial Fourier support
consists exactly of the elements of order $pq$. Hence $m_C=2$ in this case.

Now suppose that $C$ contains the equivalence classes of elements of orders
$p$ and $q$, but not that of order $pq$. Since $\widehat f$ vanishes on the
elements of order $pq$, the decomposition above applies. If $f$ were
$\{0,1\}$-valued, then both a $p$-line and a $q$-line could not occur with
positive coefficient in this decomposition, since every $p$-line intersects
every $q$-line and the value at their intersection would be at least $2$.
Thus the Fourier support of $f$ could not contain both the order-$p$ and
the order-$q$ classes. Hence $m_C\ge2$. Conversely, the sum of the
characteristic functions of one $p$-line and one $q$-line is
$\{0,1,2\}$-valued and has nontrivial Fourier support exactly on these two
classes. Therefore $m_C=2$ also in this case.

We leave it to the reader to verify that $m_C=1$ for every other subset $C$ of $[4] \setminus \{1\}$.
\end{proof}

\end{document}